\documentclass[10pt]{article}

\usepackage{amsmath, amsthm, amssymb}
\usepackage{enumitem}
\usepackage{mathtools}
\usepackage{graphicx}
\usepackage[onehalfspacing]{setspace}
\usepackage{etoolbox}
\usepackage[a4paper, margin=3cm]{geometry}
\usepackage{xcolor}
\usepackage[noadjust, nocompress]{cite}
\usepackage{authblk}
\usepackage{nccmath}
\usepackage[normalem]{ulem}
\usepackage{thmtools, hyperref} \hypersetup{colorlinks=true, allcolors=teal}

\theoremstyle{plain}
\newtheorem{theorem}{Theorem}[section]
\newtheorem{lemma}[theorem]{Lemma}
\newtheorem{proposition}[theorem]{Proposition}

\theoremstyle{definition}
\newtheorem{definition}[theorem]{Definition}
\newtheorem{example}[theorem]{Example}
\newtheorem*{remark}{Remark}

\renewenvironment{proof}[1][Proof.]{\begin{trivlist}
		\item[\hskip \labelsep {\bfseries #1}]}{\qed \end{trivlist}}

\setlist{noitemsep, topsep=1ex, parsep=1ex, partopsep=1ex}

\allowdisplaybreaks
\appto\normalsize{
	\abovedisplayskip=2ex plus 1ex minus 1ex
	\belowdisplayskip=2ex plus 1ex minus 1ex
	\abovedisplayshortskip=2ex plus 1ex minus 1ex
	\belowdisplayshortskip=2ex plus 1ex minus 1ex}
\appto\small{
	\abovedisplayskip=2ex plus 1ex minus 1ex
	\belowdisplayskip=2ex plus 1ex minus 1ex
	\abovedisplayshortskip=2ex plus 1ex minus 1ex
	\belowdisplayshortskip=2ex plus 1ex minus 1ex}

\newcommand{\gap}{\vspace{1ex}}

\newcommand{\R}{\mathbb{R}}
\newcommand{\Rn}{\mathbb{R}^n}
\newcommand{\V}{\mathcal{V}}
\newcommand{\W}{\mathcal{W}}

\newcommand{\bfa}{\mathbf{a}}

\newcommand{\bfc}{\mathbf{c}}

\newcommand{\bfe}{\mathbf{e}}

\newcommand{\bfu}{\mathbf{u}}

\newcommand{\bfx}{\mathbf{x}}
\newcommand{\bfy}{\mathbf{y}}
\newcommand{\bfz}{\mathbf{z}}

\newcommand{\tr}{\operatorname{tr}}

\newcommand{\norm}[1]{\left\Vert #1 \right\Vert}
\newcommand{\ip}[2]{\left< #1,\, #2 \right>}
\newcommand{\set}[2]{\left\lbrace #1 \, : \, #2 \right\rbrace}

\title{Generalized convexity of spectral functions on Euclidean Jordan algebras}

\author[1]{Juyoung Jeong}
\affil[1]{\small
	Department of Mathematics\\
	Soongsil University\\
	Seoul 06978\\
	Republic of Korea\\
	jjycjn@ssu.ac.kr}
\date{\today}

\begin{document}

\maketitle

\begin{abstract}
	This paper investigates transfer principles for generalized convexity of spectral functions on Euclidean Jordan algebras. A spectral function is induced by the eigenvalue map and an underlying symmetric function on a symmetric subset of $\Rn$. We establish transfer principles for several generalized convexity notions, such as (strict/semistrict) quasiconvexity and (strict) pseudoconvexity, together with their strong variants, by showing that these properties are preserved in both directions between a spectral function and its associated symmetric function. These results extend the transfer principles for (strict) convexity and provide a unified framework for generalized convexity in the setting of Euclidean Jordan algebras.
	
	\vspace{2ex}
	
	\noindent{\bf Key Words:} Euclidean Jordan algebra, generalized convexity, spectral function, transfer principle \\
	\noindent{\bf AMS Subject Classification:} 17C20, 26B25, 90C26
\end{abstract}

\vspace{6ex}

\section{Introduction}

Optimization over symmetric cones has emerged as a unifying framework for linear programming, second-order cone programming, and semidefinite programming. A natural algebraic framework for the study of these problems is provided by Euclidean Jordan algebras. This framework allows classical results from matrix analysis to be extended to a broader algebraic setting through an eigenvalue map. One of the central objects of study in this setting is the notion of spectral functions, namely, functions defined on a Euclidean Jordan algebra that depend only on the eigenvalues of their arguments. Formally, a spectral function $G$ on a Euclidean Jordan algebra $\V$ is of the form $G = f \circ \lambda$, where $\lambda : \V \to \Rn$ is the eigenvalue map and $f$ is a symmetric function on $\Rn$. 

\gap
Relationships between the various properties of a spectral function $G$ and those of the underlying symmetric function $f$ are known as \emph{transfer principles}. In the context of convex analysis, these principles are well established. Starting from the pioneering work of Lewis \cite{lewis}, Baes \cite{baes} showed that the convexity of $f$ transfers to $G$. Later, Jeong and Gowda \cite{jeong-gowda_spectralset} observed that the implication is in fact two-sided. Subsequent work of Jeong and Sossa \cite{jeong-sossa_commutation} further proved that strict convexity also transfers between $f$ and $G$ in both directions. This connection has been essential in the design of certain optimization algorithms and in the analysis of the geometry of spectral sets \cite{iusem-seeger, ito-lourenco}.

\gap
However, a large class of optimization problems in economics, engineering, and control theory involve objective functions that are not necessarily convex but satisfy weaker notions of convexity. Generalized convexity, in particular quasiconvexity and pseudoconvexity, plays a pivotal role in modern optimization. For instance, for a continuous quasiconvex function on a convex set, semistrict quasiconvexity is equivalent to the property that every local minimum is global, while pseudoconvexity guarantees that stationary points are globally optimal. Despite their importance, a comprehensive treatment of transfer principles for these generalized notions of convexity within the framework of Euclidean Jordan algebras has remained less developed than in the convex setting.

\gap
In this paper, we address this gap by establishing transfer principles for several generalized convexity notions. We prove that the properties of quasiconvexity, strict quasiconvexity, semistrict quasiconvexity, pseudoconvexity, and strict pseudoconvexity transfer between a spectral function $G$ and its associated symmetric function $f$ in both directions. Furthermore, we extend these results to strongly convex and strongly quasiconvex functions.

\gap
The remainder of the paper is organized as follows. In Section 2, we provide the necessary preliminaries by reviewing basic concepts and results in Euclidean Jordan algebras. We also explore several classes of generalized convex functions. Section 3 presents the main transfer results for these generalized convexity notions, together with results for strong convexity and strong quasiconvexity.

\section{Preliminaries}

Throughout the paper $\Rn$ denotes the $n$-dimensional Euclidean space and $\Rn_+$ denotes the nonnegative orthant of $\Rn$. For $u \in \Rn$, $u^{\downarrow}$ is obtained by rearranging $u$ so that the entries have decreasing order. For $u, v \in \Rn$, we say that $u$ is \emph{majorized by} $v$ and write $u \prec v$ provided, for each $k = 1, 2, \ldots, n-1$,
\[ \sum_{i=1}^{k} u_i^{\downarrow} \leq \sum_{i=1}^{k} v_i^{\downarrow} \quad \text{and} \quad \sum_{i=1}^{n} u_i^{\downarrow} = \sum_{i=1}^{n} v_i^{\downarrow}. \]
Combining Birkhoff's theorem \cite[Theorem 2.A.2]{marshal-olkin-arnold} and Hardy-Little-P{\'o}lya theorem \cite[Theorem 2.B.2]{marshal-olkin-arnold}, for $u, v \in \Rn$, we see that $u \prec v$ if and only if 
\[ u = \sum_{i=1}^{k} \alpha_i P_iv \]
for some $\alpha_1, \alpha_2, \ldots, \alpha_k > 0$ that sum to one and $P_1, P_2, \ldots, P_k$ are permutation matrices. 

\subsection{Euclidean Jordan algebras}

A Euclidean Jordan algebra \cite{faraut-koranyi} is a finite-dimensional real inner product space $(\V, \ip{\cdot}{\cdot})$ endowed with a bilinear operation $\circ : \V \times \V \to \V$, known as the \emph{Jordan product}, satisfying the following three axioms: for all $\bfx, \bfy, \bfz \in \V$,
\begin{itemize}
	\item[$(i)$] $\bfx \circ \bfy = \bfy \circ \bfx$,
	\item[$(ii)$] $\bfx^2 \circ (\bfx \circ \bfy) = \bfx \circ (\bfx^2 \circ \bfy)$, where $\bfx^2 = \bfx \circ \bfx$,
	\item[$(iii)$] $\ip{\bfx \circ \bfy}{\bfz} = \ip{\bfx}{\bfy \circ \bfz}$.
\end{itemize}
The set of squares $\V_+ = \set{\bfx^2}{\bfx \in \V}$ is called the \emph{symmetric cone} of $\V$. This structure generalizes the algebra $\mathcal{S}^n$ of $n \times n$ symmetric matrices with the Jordan product defined as $A \circ B = \frac{1}{2}(AB + BA)$ and the trace inner product $\tr(AB)$ for $A, B \in \mathcal{S}^n$. In this case, $\mathcal{S}^n_+$ is precisely the set of all $n \times n$ positive semidefinite matrices.

\gap
The fundamental classification theorem for Euclidean Jordan algebras asserts that any Euclidean Jordan algebra can be decomposed into a direct sum of simple algebras, which are isomorphic to one of five classes: the algebra of $n \times n$ real symmetric matrices, the algebra of $n \times n$ complex Hermitian matrices, the algebra of $n \times n$ quaternionic Hermitian matrices, a Jordan spin algebra, or an exceptional Jordan algebra corresponding to $3 \times 3$ octonionic Hermitian matrices.

\gap
Throughout the paper, we let $\V$ denote a Euclidean Jordan algebra of rank $n$ with the canonical inner product $\ip{\bfx}{\bfy} = \tr(\bfx \circ \bfy)$. Assume, moreover, that $\V$ has the \emph{unit element} $\bfe \in \V$ such that $\bfe \circ \bfx = \bfx$ for all $\bfx \in \V$. An element $\bfc \in \V$ is called an \emph{idempotent} provided $\bfc^2 = \bfc$. It is a \emph{primitive idempotent} if it cannot be written as a sum of nonzero idempotents. A set $\{\bfe_1, \bfe_2, \ldots, \bfe_n\}$ of orthonormal primitive idempotents that sum to $\bfe$ is then called a \emph{Jordan frame}.

\begin{proposition}[unique spectral decomposition theorem]
	For any $\bfx \in \V$, there exist distinct real numbers $x_1 > x_2 > \cdots > x_p$, where $p \leq n$ depends on $\bfx$, and a system of nonzero orthogonal idempotents $\{\bfc_1, \bfc_2, \ldots, \bfc_p\}$ which sum to $\bfe$ such that
	\[ \bfx = x_1 \bfc_1 + x_2 \bfc_2 + \cdots + x_p \bfc_p. \]
	This decomposition is unique in the sense that if $\bfx = x'_1 \bfc'_1 + x'_2 \bfc'_2 + \cdots + x'_q \bfc'_q$, then $p = q$ and $x_j = x'_j$, $\bfc_j = \bfc'_j$ for all $1 \leq j \leq p$.
\end{proposition}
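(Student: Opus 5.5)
The plan is to prove existence by working inside the subalgebra generated by $\bfx$, and uniqueness by showing that the idempotents $\bfc_j$ are forced to be polynomials in $\bfx$.

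\textbf{Existence.} Let $\mathbb{R}[\bfx]$ be the subalgebra generated by $\bfx$ and $\bfe$, spanned by the powers $\bfx^k$.

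\begin{itemize}
\item By power associativity, a consequence of axiom $(ii)$, $\mathbb{R}[\bfx]$ is a finite-dimensional commutative and associative real algebra.
\item It is reduced: if $\bfy^2 = 0$, then $\ip{\bfy}{\bfy} = \tr(\bfy^2) = 0$ by axiom $(iii)$ and the canonical inner product, so $\bfy = 0$. Hence it has no nonzero nilpotents.
\item The Euclidean structure also forces real eigenvalues. Multiplication by $\bfx$ is self-adjoint on $\mathbb{R}[\bfx]$ by axiom $(iii)$, so its minimal polynomial $m$ has only real roots.
\item Reducedness forces these roots to be simple. Write $m(t) = \prod_{j=1}^p (t - x_j)$ with $x_1 > \cdots > x_p$.
\item Since $\mathbb{R}[\bfx] \cong \mathbb{R}[t]/(m)$, the Chinese remainder theorem gives $\mathbb{R}[\bfx] \cong \mathbb{R}^p$.
\item Concretely, set $\bfc_j = \ell_j(\bfx)$, where $\ell_j(t) = \prod_{i \ne j} (t - x_i)/(x_j - x_i)$ is the Lagrange interpolation polynomial.
\end{itemize}

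Since $\ell_j \ell_k \equiv \delta_{jk} \ell_j$ modulo $m$, we get $\bfc_j \circ \bfc_k = \delta_{jk}\bfc_j$. Summing, $\sum_j \ell_j = 1$ gives $\sum_j \bfc_j = \bfe$. Using $\sum_j x_j \ell_j(t) = t$, we get $\bfx = \sum_j x_j \bfc_j$. Each $\bfc_j \neq 0$, since otherwise $m$ would not be minimal. Orthogonality follows from $\ip{\bfc_j}{\bfc_k} = \tr(\bfc_j \circ \bfc_k)$.

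\textbf{Uniqueness.} Suppose $\bfx = \sum_{j=1}^q x'_j \bfc'_j$ with distinct $x'_j$ and nonzero orthogonal idempotents summing to $\bfe$.

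\begin{itemize}
\item Orthogonality ($\bfc'_j \circ \bfc'_k = 0$ for $j \neq k$) and power associativity in the associative subalgebra they span give $q(\bfx) = \sum_j q(x'_j)\bfc'_j$ for every polynomial $q$.
\item Hence $q(\bfx) = 0$ if and only if $q(x'_j) = 0$ for all $j$, because the $\bfc'_j$ are nonzero and orthogonal, hence linearly independent.
\item So the minimal polynomial of $\bfx$ is $\prod_j (t - x'_j)$. This forces $q = p$ and $\{x'_j\} = \{x_j\}$; ordering both decreasingly gives $x'_j = x_j$.
\item Finally, $\ell_j(\bfx) = \sum_k \ell_j(x_k)\bfc'_k = \bfc'_j$, so $\bfc'_j = \bfc_j$.
\end{itemize}

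\textbf{Main obstacle.} The delicate step is establishing that $\mathbb{R}[\bfx]$ is associative (power associativity from axiom $(ii)$), together with the real-rootedness and simple-root argument for $m$. I would take power associativity from Faraut–Korányi, as is standard, and then derive the rest from the positivity of the trace form.
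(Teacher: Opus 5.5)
Your proof is correct, and it is the standard argument for this result, as in Faraut--Kor\'anyi: associativity of $\mathbb{R}[\bfx]$, real simple roots of the minimal polynomial $m$, the Lagrange idempotents $\bfc_j=\ell_j(\bfx)$, and uniqueness from $q(\bfx)=\sum_j q(x'_j)\bfc'_j$. The paper gives no proof of this proposition; it is quoted as background, so there is no different route to compare against.

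Two small repairs are needed.

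\textbf{Circular use of the trace.} In this paper $\tr$ is defined through the eigenvalue map, which presupposes the decomposition you are constructing. So avoid writing $\ip{\bfy}{\bfy}=\tr(\bfy^2)$. Axiom $(iii)$ together with the unit gives what you need directly: $\ip{\bfy}{\bfy}=\ip{\bfe\circ\bfy}{\bfy}=\ip{\bfe}{\bfy^2}$. Likewise, $\ip{\bfc_j}{\bfc_k}=\ip{\bfc_j\circ\bfc_j}{\bfc_k}=\ip{\bfc_j}{\bfc_j\circ\bfc_k}=0$.

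\textbf{The bound $p\le n$.} You never verify this clause of the statement. It follows at once from $p=\deg m=\dim\mathbb{R}[\bfx]$, provided the rank is defined as in Faraut--Kor\'anyi, namely as $\max_{\bfx\in\V}\dim\mathbb{R}[\bfx]$.

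Two minor points as well. The identity $\sum_j x_j\ell_j(t)=t$ holds only modulo $m$ when $p=1$, though that is all you use. Also, identifying the minimal polynomial of $L_{\bfx}|_{\mathbb{R}[\bfx]}$ with that of the element $\bfx$ deserves its one-line justification, $q(L_{\bfx})\bfx^{l}=q(\bfx)\circ\bfx^{l}$.
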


\begin{proposition}[complete spectral decomposition theorem]
	For any $\bfx \in \V$, there exist real numbers $\lambda_1 \geq \lambda_2 \geq \cdots \geq \lambda_n$ and a Jordan frame $\{\bfe_1, \bfe_2, \ldots, \bfe_n\}$ such that
	\[ \bfx = \lambda_1 \bfe_1 + \lambda_2 \bfe_2 + \cdots + \lambda_n \bfe_n. \]
	Furthermore, if $\bfx = \lambda'_1 \bfe'_1 + \lambda'_2 \bfe'_2 + \cdots + \lambda'_n \bfe'_n$, then $\lambda_j = \lambda'_j$ for all $1 \leq j \leq n$ and $\sum_{\set{j}{\lambda_j = t}} \bfe_j = \sum_{\set{j}{\lambda'_j = t}} \bfe'_j$ for all $t \in \R$.
\end{proposition}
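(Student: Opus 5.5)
The plan is to deduce the complete spectral decomposition from the unique spectral decomposition stated just above. We refine each idempotent $\bfc_j$ into primitive pieces to get existence. For uniqueness, we regroup equal eigenvalues and invoke the uniqueness clause of the earlier proposition.

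\emph{Existence.} Write $\bfx = x_1\bfc_1 + \cdots + x_p\bfc_p$ as in the unique spectral decomposition theorem. I first show that every nonzero idempotent $\bfc$ is a finite sum of orthogonal primitive idempotents.
\begin{itemize}
	\item If $\bfc$ is not primitive, then $\bfc = \bfa + \bfb$ with $\bfa, \bfb$ nonzero idempotents.
	\item From $\bfc^2 = \bfc$ we get $\bfa\circ\bfb = 0$, so $\ip{\bfa}{\bfb} = \tr(\bfa\circ\bfb) = 0$.
	\item Hence $\norm{\bfc}^2 = \norm{\bfa}^2 + \norm{\bfb}^2$.
	\item For any nonzero idempotent $\bfd$, $\norm{\bfd}^2 = \tr(\bfd)$. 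Since $\bfd$ has eigenvalues in $\{0,1\}$, this is a positive integer.
	\item So the splitting strictly decreases a positive integer and must terminate. Induction on $\tr(\bfc)$ makes this rigorous.
\end{itemize}
The resulting primitive idempotents within one $\bfc_j$ are pairwise orthogonal. The reason is that if $\bfa+\bfb$ is idempotent with $\bfa, \bfb$ idempotent, then $\bfa\circ\bfb=0$, and this relation persists under further splitting. Pieces of different $\bfc_j$'s are orthogonal because $\bfc_i\circ\bfc_j=0$ for $i \neq j$ (take inner products and use positivity of idempotents). Collecting all pieces gives orthonormal primitive idempotents summing to $\bfe$, that is, a Jordan frame. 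Each piece carries the eigenvalue $x_j$ of the $\bfc_j$ it came from. Listing these values with multiplicity in decreasing order gives $\lambda_1 \ge \cdots \ge \lambda_n$.

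\emph{Main obstacle.} We must check that the number of primitive idempotents obtained is exactly $n$, the rank. I would invoke the standard fact (Faraut--Kor\'anyi) that all Jordan frames in $\V$ have the same cardinality, equal to the rank. Equivalently, one can note that $\tr(\bfe) = n$ and that $\tr$ of a primitive idempotent equals $1$. Summing the traces of the frame elements then gives the count $n$.

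\emph{Uniqueness.} Suppose $\bfx = \sum_j \lambda'_j\bfe'_j$ is another complete decomposition. Let $t_1 > \cdots > t_q$ be the distinct values among the $\lambda'_j$, and set $\bfc'_k = \sum_{\lambda'_j = t_k}\bfe'_j$. These $\bfc'_k$ are nonzero orthogonal idempotents summing to $\bfe$, and $\bfx = \sum_k t_k\bfc'_k$. By the uniqueness clause of the unique spectral decomposition theorem, $q=p$, $t_k = x_k$, and $\bfc'_k = \bfc_k$. The same grouping applied to the first decomposition also yields $\bfc_k$. This proves the identity of grouped idempotents for every $t\in\R$; for $t$ not an eigenvalue, both sums are empty. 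For the eigenvalues themselves, the multiplicity of $x_k$ equals $\tr(\bfc_k)$ in either decomposition, since primitive idempotents have trace one. Hence the ordered lists $\lambda_j$ and $\lambda'_j$ coincide.
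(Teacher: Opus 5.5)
The paper gives no proof of this proposition; it is imported background from Faraut--Kor\'anyi, so your argument has to stand on its own. As a reduction of the complete decomposition to the unique one, plus the fact that every Jordan frame has $n$ elements, it is sound, and the regrouping in the uniqueness part is correct.

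The genuine gap is the step you yourself flag: that refining the $\bfc_j$ yields exactly $n$ primitive idempotents, each of norm one. Neither justification you offer is independent of the statement.
\begin{itemize}
\item In this paper $\tr$ is \emph{defined} as $\sum_i\lambda_i(\bfx)$, i.e.\ via the eigenvalue map that this proposition furnishes. So ``$\tr(\bfe)=n$'', ``primitive idempotents have trace $1$'' and ``idempotents have eigenvalues in $\{0,1\}$'' presuppose what you are proving. The last of these is also used in your termination argument.
\item If $\tr$ is instead the generic trace, $\tr(\bfe)=n$ is harmless. But $\tr(\bfc)=1$ for primitive $\bfc$ is normally derived \emph{from} the complete spectral theorem.
\item Likewise, ``all Jordan frames have $n$ elements'' is usually obtained from this theorem. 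For a frame $\{\mathbf{d}_1,\dots,\mathbf{d}_m\}$, one decomposes $\sum_{i=1}^m i\,\mathbf{d}_i$ over an $n$-element frame and invokes primitivity.
\end{itemize}
The orthonormality of your pieces and your multiplicity count via $\tr(\bfc_k)$ rest on the same fact. In the uniqueness part you can avoid it: both frames are orthonormal by definition, so the multiplicity is $\norm{\bfc_k}^2$.

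The missing idea is a way to \emph{produce} an $n$-element frame. Take the rank $n$ to be the largest number of terms that can occur in a unique spectral decomposition, and argue as follows.
\begin{itemize}
\item Suppose $\bfx=\sum_{j=1}^n x_j\bfc_j$ attains this maximum. Then each $\bfc_j$ is primitive. Splitting one would give $n+1$ orthogonal nonzero idempotents summing to $\bfe$, and a combination of them with distinct coefficients would have $n+1$ terms.
\item Each such $\bfc_j$ also has $\norm{\bfc_j}^2=\tr(\bfc_j)=1$. For distinct $y_j$, the minimal polynomial $\prod_j(t-y_j)$ of $\sum_j y_j\bfc_j$ is the generic one, so $\tr(\sum_j y_j\bfc_j)=\sum_j y_j$. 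This extends to all $y$ by linearity and density.
\item A general $\bfx$ is a limit of such regular elements. Their ordered eigenvalue vectors and frames stay bounded. A limit point of the frames is a complete system of $n$ orthogonal idempotents of norm one, hence a Jordan frame by the same $(n+1)$-argument, and $\bfx$ decomposes over it.
\end{itemize}
Your regrouping then gives uniqueness.

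Two smaller repairs are needed in the splitting step. First, terminate the splitting via linear independence of pairwise orthogonal nonzero idempotents (multiply a vanishing combination by one of them); this caps their number at $\dim\V$. Second, justify that orthogonality persists under splitting. If $\mathbf{a}=\mathbf{a}'+\mathbf{a}''$ and $\mathbf{a}\circ\mathbf{b}=0$, then $\mathbf{a}'\in\V(\mathbf{a},1)$ and $\mathbf{b}\in\V(\mathbf{a},0)$, and these Peirce spaces multiply to zero.
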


The map $\lambda : \V \to \Rn$ defined by $\bfx \mapsto \lambda(\bfx) = (\lambda_1, \lambda_2, \ldots, \lambda_n) \in \R^n$ is called the \emph{eigenvalue map}. With the eigenvalue map, one can define the \emph{trace} and the \emph{determinant} of $\bfx \in \V$, respectively, by 
\[ \tr(\bfx) = \sum_{i=1}^{n} \lambda_i(\bfx) \quad \text{and} \quad \det(\bfx) = \prod_{i=1}^{n} \lambda_i(\bfx). \]
Also, it can be shown that $\V_+ = \set{\bfx \in \V}{\lambda(\bfx) \in \Rn_+}$. 

\gap
For $\bfx, \bfy \in \V$, we say that $\bfx$ and $\bfy$ \emph{operator commute} provided $L_{\bfx} L_{\bfy} = L_{\bfy} L_{\bfx}$, where $L_{\bfx} : \V \to \V$ is defined by $L_{\bfx}(\bfa) = \bfa \circ \bfx$ for all $\bfa \in \V$. It is well known that operator commutativity is equivalent to a simultaneous spectral decomposition. That is, there exist real numbers $\lambda_1 \geq \lambda_2 \geq \cdots \geq \lambda_n$ and $\mu_1 \geq \mu_2 \geq \cdots \geq \mu_n$ and a Jordan frame $\{\bfe_1, \bfe_2, \ldots, \bfe_n\}$ such that
\[ \bfx = \lambda_1 \bfe_1 + \lambda_2 \bfe_2 + \cdots + \lambda_n \bfe_n \quad \text{and} \quad \bfy = \mu_{\sigma(1)} \bfe_1 + \mu_{\sigma(2)} \bfe_2 + \cdots + \mu_{\sigma(n)} \bfe_n, \]
for some permutation $\sigma$ of $\{1, 2, \ldots, n\}$.

\begin{proposition} \label{prop:vonNeumann_inequality}
	For $\bfx, \bfy \in \V$, the following von Neumann-type inequality holds,
	\[ \ip{\bfx}{\bfy} \leq \ip{\lambda(\bfx)}{\lambda(\bfy)}. \]
\end{proposition}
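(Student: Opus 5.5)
The plan is to reduce the inequality to a statement about a single Jordan frame, using the complete spectral decomposition theorem for $\bfy$. Write $\bfy = \sum_{j=1}^n \mu_j \bfe_j$ with $\mu = \lambda(\bfy)$ (decreasing) and $\{\bfe_1,\ldots,\bfe_n\}$ a Jordan frame. Then
\[ \ip{\bfx}{\bfy} = \sum_{j=1}^n \mu_j \ip{\bfx}{\bfe_j}. \]
So it suffices to show two things about the vector $d = (\ip{\bfx}{\bfe_1}, \ldots, \ip{\bfx}{\bfe_n}) \in \Rn$, the ``diagonal'' of $\bfx$ relative to this frame:
\begin{itemize}
\item[(a)] $d \prec \lambda(\bfx)$;
\item[(b)] given (a), $\ip{d}{\mu} \leq \ip{\lambda(\bfx)}{\mu}$.
\end{itemize}

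Step (b) is elementary. By the Birkhoff/Hardy--Littlewood--P\'olya characterization recalled above, $d = \sum_i \alpha_i P_i \lambda(\bfx)$ with $\alpha_i > 0$ summing to one. For each $i$, the classical rearrangement inequality gives $\ip{P_i \lambda(\bfx)}{\mu} \leq \ip{\lambda(\bfx)}{\mu}$, since both $\lambda(\bfx)$ and $\mu$ are decreasingly ordered. Taking the convex combination yields (b).

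The main step is (a), a Schur--Horn type majorization in $\V$. The equality of total sums is easy: $\sum_j \ip{\bfx}{\bfe_j} = \ip{\bfx}{\bfe} = \tr(\bfx)$. For the partial sums, after reordering assume $d$ is decreasing. For each $k$ we then need
\[ \sum_{j\le k} \ip{\bfx}{\bfe_j} = \ip{\bfx}{\bfc} \leq \sum_{i\le k}\lambda_i(\bfx), \qquad \bfc = \bfe_1+\cdots+\bfe_k , \]
where $\bfc$ is an idempotent of rank $k$. This is a Ky Fan type variational inequality:
\[ \max\{\ip{\bfx}{\bfc} : \bfc \text{ idempotent of rank } k\} = \sum_{i\le k}\lambda_i(\bfx). \]

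I would prove this inequality via the Peirce decomposition of $\bfx$ with respect to $\bfc$. The Peirce projection onto $\V(\bfc,1)$ preserves $\ip{\bfx}{\bfc}$. The compression $P_{\bfc}\bfx$ to the subalgebra $\V(\bfc,1)$, which has rank $k$, has eigenvalues interlacing those of $\bfx$. This is a Cauchy interlacing theorem in Euclidean Jordan algebras, which I would cite from the literature (e.g.\ Hirzebruch's min-max theorem for Jordan algebras) rather than derive. Given interlacing, $\ip{\bfx}{\bfc} = \tr(P_{\bfc}\bfx) \le \sum_{i\le k}\lambda_i(\bfx)$.

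The obstacle I expect is exactly this interlacing / min-max ingredient, since the paper has not stated it. If it cannot simply be cited, I would argue by induction on $k$ using the Courant--Fischer characterization $\lambda_1(\bfx)=\max\{\ip{\bfx}{\bfu} : \bfu \text{ primitive idempotent}\}$. Alternatively, I would reduce to the simple components via the classification and handle each one using the matrix von Neumann/Ky Fan inequality and the known spin-factor case.
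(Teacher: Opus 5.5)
Your argument is correct. The paper gives no proof to compare it with: it states this proposition in the preliminaries as a standard fact, like the majorization $\lambda(\bfx+\bfy)\prec\lambda(\bfx)+\lambda(\bfy)$ recorded right after it.

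Your reduction checks out:
\begin{itemize}
\item The reduction to $d\prec\lambda(\bfx)$ followed by Birkhoff and rearrangement is sound.
\item The identity $\ip{\bfx}{\bfc}=\tr(P_{\bfc}\bfx)$ holds because the Peirce decomposition is orthogonal and $\bfc$ is the unit of $\V(\bfc,1)$.
\item You only need the upper half $\lambda_i(P_{\bfc}\bfx)\le\lambda_i(\bfx)$ of interlacing.
\end{itemize}

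There are two caveats. First, the interlacing theorem you would cite (Gowda--Tao) is stated for simple algebras, so you need a one-line reduction over the simple components. If $\bfc=\sum_l\bfc_l$ with ranks $k_l$ summing to $k$, then $\ip{\bfx}{\bfc}=\sum_l\ip{\bfx_l}{\bfc_l}\le\sum_l\sum_{i\le k_l}\lambda_i(\bfx_l)\le\sum_{i\le k}\lambda_i(\bfx)$. Second, neither fallback works as sketched. In the induction, peeling off one primitive idempotent using only the characterization of $\lambda_1$ gives the bound $\lambda_1+\sum_{i\le k-1}\lambda_i$, which is too weak; fixing it needs the full min-max for $\lambda_k$, i.e.\ interlacing again. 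The classification route would also have to treat the exceptional algebra.

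More usefully, the step you flagged as the obstacle needs no interlacing or min-max at all.
\begin{itemize}
\item Write $\bfx=\sum_i\lambda_i\mathbf{f}_i$ in its own Jordan frame and put $D_{ij}=\ip{\mathbf{f}_i}{\bfe_j}$.
\item By axiom $(iii)$, $D_{ij}=\ip{\mathbf{f}_i}{L_{\bfe_j}\mathbf{f}_i}\ge 0$, since $L_{\bfe_j}$ is self-adjoint with Peirce spectrum in $\{0,\tfrac12,1\}$.
\item The row and column sums are $\ip{\mathbf{f}_i}{\bfe}=\tr(\mathbf{f}_i)=1$ and $\tr(\bfe_j)=1$, so $D$ is doubly stochastic.
\item Hence $d=D^{T}\lambda(\bfx)\prec\lambda(\bfx)$ follows directly from Hardy--Littlewood--P\'olya.
\item In fact $\ip{\bfx}{\bfy}=\lambda(\bfx)^{T}D\,\lambda(\bfy)\le\ip{\lambda(\bfx)}{\lambda(\bfy)}$ follows at once from Birkhoff and rearrangement, exactly as in your step (b).
\end{itemize}

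Alternatively, using only facts the paper itself records, the inequality takes two lines. Combine $\lambda(\bfx+\bfy)\prec\lambda(\bfx)+\lambda(\bfy)$, Schur-convexity of $\norm{\cdot}^2$, and $\norm{\lambda(\bfz)}=\norm{\bfz}$ to get $\norm{\bfx+\bfy}^2=\norm{\lambda(\bfx+\bfy)}^2\le\norm{\lambda(\bfx)+\lambda(\bfy)}^2$, then expand both sides.

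Your route does produce the Ky Fan principle and the Schur--Horn-type majorization of the diagonal as by-products. The cost is importing a much deeper theorem than the statement requires.
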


If equality holds in the von Neumann-type inequality, that is, when $\ip{\bfx}{\bfy} = \ip{\lambda(\bfx)}{\lambda(\bfy)}$, we say that $\bfx$ and $\bfy$ \emph{strongly operator commute}. Interestingly enough, strong operator commutativity is equivalent to simultaneous ordered spectral decomposition, i.e., $\bfx$ and $\bfy$ have simultaneous spectral decomposition with an additional condition that the permutation $\sigma$ is an identity permutation. More precisely, there exist real numbers $\lambda_1 \geq \lambda_2 \geq \cdots \geq \lambda_n$ and $\mu_1 \geq \mu_2 \geq \cdots \geq \mu_n$ and a Jordan frame $\{\bfe_1, \bfe_2, \ldots, \bfe_n\}$ such that
\[ \bfx = \lambda_1 \bfe_1 + \lambda_2 \bfe_2 + \cdots + \lambda_n \bfe_n \quad \text{and} \quad \bfy = \mu_1 \bfe_1 + \mu_2 \bfe_2 + \cdots + \mu_n \bfe_n. \]

\gap
There are several equivalent descriptions of strong operator commutativity which will be frequently used in this paper. Recall that for any $\bfx, \bfy \in \V$, the following majorization inequality holds:
\[ \lambda(\bfx + \bfy) \prec \lambda(\bfx) + \lambda(\bfy). \]

\begin{proposition} \label{prop:strongly_operator_commute}
	For $\bfx, \bfy \in \V$, the following are equivalent:
	\begin{itemize}
		\item[$(a)$] $\ip{\bfx}{\bfy} = \ip{\lambda(\bfx)}{\lambda(\bfy)}$,
		\item[$(b)$] $\lambda(\bfx + \bfy) = \lambda(\bfx) + \lambda(\bfy)$,
		\item[$(c)$] $\norm{\lambda(\bfx) - \lambda(\bfy)} = \norm{\bfx - \bfy}$.
	\end{itemize}
\end{proposition}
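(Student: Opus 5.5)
We will prove the equivalences by routing everything through $(a)$. Two ingredients from earlier in the paper do the work: the von Neumann-type inequality (Proposition~\ref{prop:vonNeumann_inequality}), and the isometry-type facts $\norm{\bfx} = \norm{\lambda(\bfx)}$ (valid for the canonical inner product, since $\ip{\bfx}{\bfx} = \tr(\bfx^2) = \sum \lambda_i(\bfx)^2$) and $\tr(\bfx) = \sum_i \lambda_i(\bfx)$. We also use that the trace is linear, so $\sum_i \lambda_i(\bfx+\bfy) = \sum_i \lambda_i(\bfx) + \sum_i \lambda_i(\bfy)$.

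\textbf{$(a)\Leftrightarrow(c)$.} Expanding both squares gives
\[ \norm{\bfx - \bfy}^2 = \norm{\bfx}^2 + \norm{\bfy}^2 - 2\ip{\bfx}{\bfy} \quad \text{and} \quad \norm{\lambda(\bfx) - \lambda(\bfy)}^2 = \norm{\lambda(\bfx)}^2 + \norm{\lambda(\bfy)}^2 - 2\ip{\lambda(\bfx)}{\lambda(\bfy)}. \]
The norms agree term by term, so the two squared quantities differ exactly by $2\bigl(\ip{\lambda(\bfx)}{\lambda(\bfy)} - \ip{\bfx}{\bfy}\bigr)$. This difference is always $\geq 0$ by Proposition~\ref{prop:vonNeumann_inequality}, and it vanishes precisely when $(a)$ holds.

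\textbf{$(a)\Leftrightarrow(b)$.} We use the same squared-norm comparison, now for $\bfx+\bfy$. First,
\[ \norm{\lambda(\bfx+\bfy)}^2 = \norm{\bfx+\bfy}^2 = \norm{\bfx}^2 + \norm{\bfy}^2 + 2\ip{\bfx}{\bfy}, \]
while
\[ \norm{\lambda(\bfx) + \lambda(\bfy)}^2 = \norm{\bfx}^2 + \norm{\bfy}^2 + 2\ip{\lambda(\bfx)}{\lambda(\bfy)}. \]

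For $(b)\Rightarrow(a)$, if $\lambda(\bfx+\bfy) = \lambda(\bfx) + \lambda(\bfy)$, the two displays coincide, which forces $\ip{\bfx}{\bfy} = \ip{\lambda(\bfx)}{\lambda(\bfy)}$.

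For $(a)\Rightarrow(b)$, I would rely on the characterization recalled just before the statement: $(a)$ is equivalent to a simultaneous ordered spectral decomposition $\bfx = \sum \lambda_i \bfe_i$, $\bfy = \sum \mu_i \bfe_i$ with both $\lambda$ and $\mu$ decreasing. Then $\bfx+\bfy = \sum (\lambda_i+\mu_i)\bfe_i$, and $\lambda_i + \mu_i$ is again decreasing. The uniqueness in the complete spectral decomposition theorem then gives $\lambda(\bfx+\bfy) = \lambda(\bfx) + \lambda(\bfy)$.

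If I want to avoid invoking that characterization, an alternative is available. Under $(a)$, the two displays show $\norm{\lambda(\bfx+\bfy)} = \norm{\lambda(\bfx)+\lambda(\bfy)}$. The majorization $\lambda(\bfx+\bfy) \prec \lambda(\bfx)+\lambda(\bfy)$ writes $\lambda(\bfx+\bfy) = \sum_i \alpha_i P_i v$ with $v = \lambda(\bfx)+\lambda(\bfy)$, a convex combination of permutations of $v$. Strict convexity of $\norm{\cdot}^2$, together with the equality of norms, forces every $P_i v$ appearing with $\alpha_i>0$ to equal $\lambda(\bfx+\bfy)$. So $\lambda(\bfx+\bfy)$ is a permutation of $v$. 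Both vectors are sorted decreasingly ($v$ is a sum of two decreasing vectors), so they are equal.

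The main obstacle is the $(a)\Rightarrow(b)$ direction, because it needs either the ordered simultaneous decomposition or the strict-convexity argument on the Birkhoff representation. I would present the majorization argument as the primary proof, since it uses only facts stated explicitly above.
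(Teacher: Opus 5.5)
The paper gives no proof of this proposition; it is recalled in the preliminaries as a known fact, so there is no in-paper argument to compare against. Your proof is correct.

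The steps $(a)\Leftrightarrow(c)$ and $(b)\Rightarrow(a)$ are clean norm expansions using $\norm{\bfx}=\norm{\lambda(\bfx)}$. Both of your routes for $(a)\Rightarrow(b)$ work:
\begin{itemize}
\item The first rests on the equivalence of $(a)$ with a simultaneous ordered spectral decomposition. That is a deep fact which the paper also only recalls.
\item The second correctly combines three facts: the majorization $\lambda(\bfx+\bfy)\prec\lambda(\bfx)+\lambda(\bfy)$; the equality case of Jensen's inequality for the strictly convex $\norm{\cdot}^2$; and the observation that two decreasingly ordered vectors that are permutations of each other must coincide.
\end{itemize}
The linearity of the trace, which you list as an ingredient, is never actually used.

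A shorter route for $(a)\Rightarrow(b)$ avoids majorization and Birkhoff entirely. Apply the von Neumann inequality to the pairs $(\bfx+\bfy,\bfx)$ and $(\bfx+\bfy,\bfy)$ and add. This gives $\ip{\lambda(\bfx+\bfy)}{\lambda(\bfx)+\lambda(\bfy)}\ge\norm{\bfx+\bfy}^2=\norm{\lambda(\bfx+\bfy)}^2$. Expanding the square then yields
\[ \norm{\lambda(\bfx+\bfy)-\lambda(\bfx)-\lambda(\bfy)}^2 \le 2\bigl(\ip{\lambda(\bfx)}{\lambda(\bfy)}-\ip{\bfx}{\bfy}\bigr), \]
so $(a)$ immediately forces $(b)$.

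Combined with your other two steps, this shows the whole proposition uses only norm preservation and the von Neumann-type inequality. Those are exactly the two axioms of a semi-FTvN system, so the equivalence holds verbatim in the abstract setting raised in the paper's concluding remarks. Your majorization argument, by contrast, relies on the ordered structure of $\Rn$ (decreasing rearrangements, Birkhoff's theorem) and so does not transfer as directly.
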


The eigenvalue map $\lambda : \V \to \Rn$ allows us to define spectral sets and functions. A set $C \subseteq \V$ is called a \emph{spectral set} if there exists a symmetric set $Q \subseteq \Rn$ such that $C = \lambda^{-1}(Q)$. As $\V_+ = \lambda^{-1}(\Rn_+)$, it is a primary example of spectral sets. A function $G : C \to \R$, where $C = \lambda^{-1}(Q)$, is called a \emph{spectral function} if there exists a symmetric function $f : Q \to \R$ such that $G = f \circ \lambda$. Recall that $\tr$ and $\det$ are spectral functions. 

\gap
Various topological and convexity properties of $Q$ can be transferred to $C = \lambda^{-1}(Q)$; see \cite{jeong-gowda_spectralset, jeong-gowda_spectralcone, gowda-jeong_connectedness}. Likewise, many analytic and convexity properties of $f : \Rn \to \R$ can be transferred to $G = f \circ \lambda$; see \cite{baes, jeong-gowda_spectralset, lourenco-takeda}. In particular, we record below a transfer principle for Fr{\'e}chet differentiability from \cite[Theorem 38]{baes}.

\begin{proposition} \label{prop:transfer_differentiability}
	Let $Q \subseteq \Rn$ be an open symmetric set and $f : Q \to \R$ be a symmetric function. Define a spectral set $C = \lambda^{-1}(Q)$ and a spectral function $G : C \to \R$ by $G = f \circ \lambda$. Let $\bfx = \sum_{i=1}^{n} \lambda_i \bfe_i \in C$, where $\{e_1, e_2, \ldots, e_n\}$ is a Jordan frame associated with a complete spectral decomposition of $x$. If $f$ is differentiable at $\lambda(\bfx)$, then $G$ is differentiable at $\bfx$ and
	\[ \nabla G(\bfx) = \sum_{i=1}^{n} \big[ \nabla f(\lambda(\bfx)) \big]_i \bfe_i. \]
	Moreover, $\bfx$ and $\nabla G(\bfx)$ operator commute.
\end{proposition}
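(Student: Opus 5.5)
Set $u=\lambda(\bfx)$ and $d=\nabla f(u)$. The plan is to show that $G$ has gradient $\bfz:=\sum_i d_i\bfe_i$ at $\bfx$ by decomposing $\ip{d}{\lambda(\cdot)}$ into Ky Fan-type partial sums and using Danskin's theorem. The main obstacle is the uniqueness of the maximizer in the third step.

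\textbf{Step 1: $\bfz$ is well defined and commutes with $\bfx$.} Since $f$ is symmetric, $f(Pw)=f(w)$ for every permutation matrix $P$. Differentiating gives $\nabla f(Pw)=P\nabla f(w)$. If $u_i=u_j$, take $P$ to be the transposition of $i$ and $j$, so $Pu=u$; then $Pd=d$, i.e.\ $d_i=d_j$. Hence $d$ is constant on each block of equal eigenvalues. Let $x_1>\cdots>x_p$ be the distinct eigenvalues with unique spectral idempotents $\bfc_1,\dots,\bfc_p$, and let $\delta_l$ be the common value of $d$ on block $l$. Then
\[ \bfz=\sum_i d_i\bfe_i=\sum_{l=1}^p \delta_l\bfc_l . \]
By the uniqueness part of the unique spectral decomposition theorem, $\bfz$ does not depend on the chosen Jordan frame. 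Since $\bfz$ and $\bfx$ are both diagonal in $\{\bfe_i\}$, they operator commute.

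\textbf{Step 2: reduction to Ky Fan sums.} Let $k_l$ be the index ending block $l$ and put $\mathbf 1_{k}=(1,\dots,1,0,\dots,0)$ with $k$ ones. Then $d=\sum_{l}(\delta_l-\delta_{l+1})\mathbf 1_{k_l}$, with $\delta_{p+1}:=0$, and the coefficients may have any sign. Define $S_k(\bfy)=\sum_{i\le k}\lambda_i(\bfy)$. This gives
\[ \ip{d}{\lambda(\bfy)}=\sum_l(\delta_l-\delta_{l+1})S_{k_l}(\bfy) \quad\text{and}\quad \ip{\bfz}{\bfh}=\sum_l(\delta_l-\delta_{l+1})\ip{\bfc_1+\cdots+\bfc_l}{\bfh}. \]
I also need $\lambda$ to be $1$-Lipschitz, $\norm{\lambda(\bfx+\bfh)-\lambda(\bfx)}\le\norm{\bfh}$. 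This follows from Proposition~\ref{prop:vonNeumann_inequality} by expanding both squared norms: $\norm{\lambda(\bfx)}=\norm{\bfx}$ and $\ip{\bfx+\bfh}{\bfx}\le\ip{\lambda(\bfx+\bfh)}{\lambda(\bfx)}$. Differentiability of $f$ at $u$ together with this Lipschitz bound gives
\[ G(\bfx+\bfh)-G(\bfx)=\ip{d}{\lambda(\bfx+\bfh)-u}+o(\norm{\bfh}). \]
So it suffices to show that each $S_{k_l}$ is differentiable at $\bfx$ with gradient $\bfc_1+\cdots+\bfc_l$.

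\textbf{Step 3: Danskin's theorem and uniqueness (main obstacle).} By Proposition~\ref{prop:vonNeumann_inequality}, for an idempotent $\bfc$ of rank $k$ we have $\ip{\bfc}{\bfy}\le\ip{\lambda(\bfc)}{\lambda(\bfy)}=S_k(\bfy)$. Equality is attained at $\bfc=\bfe_1+\cdots+\bfe_k$. Hence
\[ S_k(\bfy)=\max\set{\ip{\bfc}{\bfy}}{\bfc \text{ idempotent of rank } k}. \]
The feasible set is compact, so $S_k$ is a convex, finite max of linear functions. By Danskin's theorem, its subdifferential at $\bfx$ is the convex hull of the maximizers. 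It therefore suffices to show that for $k=k_l$ the maximizer is unique and equals $\bfc_1+\cdots+\bfc_l$.

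Suppose $\ip{\bfc}{\bfx}=S_k(\bfx)=\ip{\lambda(\bfc)}{\lambda(\bfx)}$. Then $\bfc$ and $\bfx$ strongly operator commute, so they admit a simultaneous ordered spectral decomposition. That is, $\bfc=\bfe'_1+\cdots+\bfe'_k$ and $\bfx=\sum_i\lambda_i\bfe'_i$ for some Jordan frame $\{\bfe'_i\}$. Because $\lambda_k>\lambda_{k+1}$, the complete spectral decomposition theorem gives $\sum_{i\le k}\bfe'_i=\bfc_1+\cdots+\bfc_l$. Hence $\bfc=\bfc_1+\cdots+\bfc_l$, the maximizer is unique, and $S_k$ is differentiable at $\bfx$ with the claimed gradient.

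Combining the three steps yields $G(\bfx+\bfh)=G(\bfx)+\ip{\bfz}{\bfh}+o(\norm{\bfh})$. Thus $G$ is differentiable at $\bfx$ with $\nabla G(\bfx)=\sum_i[\nabla f(\lambda(\bfx))]_i\bfe_i$, and $\nabla G(\bfx)$ operator commutes with $\bfx$ by Step 1.
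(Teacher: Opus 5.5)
The paper does not prove this proposition. It quotes it from Baes \cite[Theorem 38]{baes} and uses it as a black box. Your argument is therefore a different, self-contained route, and it is correct.

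The steps all check out.
\begin{itemize}
\item $d=\nabla f(\lambda(\bfx))$ is constant on eigenvalue blocks. This needs differentiability only at $u$: it gives differentiability at $Pu$ with gradient $Pd$, which is all you use.
\item Norm preservation and the von Neumann inequality make $\lambda$ $1$-Lipschitz. This reduces the problem to the function $\ip{d}{\lambda(\cdot)}$.
\item The block-boundary expansion $d=\sum_l(\delta_l-\delta_{l+1})\mathbf 1_{k_l}$ turns that function into a signed combination of Ky Fan sums $S_{k_l}$.
\item Each $S_{k_l}$ is the support function of the compact set of rank-$k_l$ idempotents. Its maximizer at $\bfx$ is unique because $\lambda_{k_l}>\lambda_{k_l+1}$ and because equality in the von Neumann inequality forces an ordered simultaneous decomposition. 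Danskin's theorem then gives differentiability of $S_{k_l}$.
\end{itemize}

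Compared with the citation, your proof shows that the only Jordan-algebraic inputs are norm preservation, the von Neumann inequality with its equality case, and uniqueness of spectral decompositions. These are exactly the tools the paper already relies on. You also get Lipschitz continuity of $\lambda$ and differentiability of $S_k$ at spectral gaps as by-products. The list of ingredients also indicates roughly what an extension to the FTvN setting mentioned in the concluding remarks would need. The citation, in turn, keeps the preliminaries short and avoids the convex-analytic detour through Danskin's theorem.

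A few details should be tidied.
\begin{itemize}
\item The rank-$k$ idempotent attaining $S_k(\bfy)$ must be built from a Jordan frame of $\bfy$, not from the fixed frame of $\bfx$.
\item The last block $l=p$ has $k_p=n$, so $\lambda_{k+1}$ is undefined. This case is trivial, since $S_n=\tr$ and the only rank-$n$ idempotent is $\bfe$.
\item Compactness of the rank-$k$ idempotents follows from closedness together with $\norm{\bfc}^2=\tr(\bfc)=k$.
\item $C$ is open because $\lambda$ is continuous, which you need so that $\bfx+\mathbf{h}\in C$ for small $\mathbf{h}$.
\end{itemize}
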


\subsection{Convexity and its generalizations}

In this subsection, we summarize the definitions and basic properties of convexity and its generalizations. We also describe how these notions are related. Detailed treatments of these subjects can be found, for example, in \cite{cambini-martein}.

\begin{definition}[Convexity] \label{def:convexity}
	Let $C$ be a convex set in $\V$ and consider a function $G : C \to \R$.
	\begin{itemize}
		\item[(a)] $G$ is said to be \emph{convex} on $C$ if 
		\[ G((1 - \alpha)\bfx + \alpha \bfy) \leq (1 - \alpha)G(\bfx) + \alpha G(\bfy) \]
		for all $\bfx, \bfy \in C$ and $\alpha \in [0, 1]$.
		
		\item[(b)] $G$ is said to be \emph{strictly convex} on $C$ if 
		\[ G((1 - \alpha)\bfx + \alpha \bfy) < (1 - \alpha)G(\bfx) + \alpha G(\bfy) \]
		for all $\bfx, \bfy \in C$ with $\bfx \neq \bfy$ and $\alpha \in (0, 1)$.
	\end{itemize}
\end{definition}	

When $G$ is convex, every local minimizer of $G$ is a global minimizer. If, in addition, $C$ is open and $G$ is Fréchet differentiable on $C$, then every critical point of $G$ is a global minimizer. In the case that $G$ is strictly convex, it has at most one global minimizer.

\gap
$G$ is convex if and only if the epigraph
\[ \operatorname{epi}(G) = \set{(\bfx, \alpha) \in C \times \R}{G(\bfx) \leq \alpha} \]
is a convex set in $\V \times \R$. Furthermore, assume that $C$ is open and $G$ is Fr{\'e}chet differentiable on $C$. Then $G$ is convex on $C$ if and only if
\[ \ip{\nabla G(\bfx)}{\bfy - \bfx} \leq G(\bfy) - G(\bfx) \]
for all $\bfx, \bfy \in C$.

\gap
The concept of convexity is closely related to majorization. A function $f : \Rn \to \R$ is said to be \emph{Schur-convex} provided $u \prec v$ implies $f(u) \leq f(v)$ for all $u, v \in \Rn$. It is said to be \emph{strictly Schur-convex} provided $u \prec v$ with $u^{\downarrow} \neq v^{\downarrow}$ implies $f(u) < f(v)$. It is well known that every symmetric convex function is Schur-convex; see \cite[Proposition 3.C.2]{marshal-olkin-arnold}.

\begin{definition}[Quasiconvexity] \label{def:quasiconvexity}
	Let $C$ be a convex set in $\V$ and consider a function $G : C \to \R$.
	\begin{itemize}
		\item[(a)] $G$ is said to be \emph{quasiconvex} on $C$ if the inequality
		\[ G((1 - \alpha)\bfx + \alpha \bfy) \leq \max\{ G(\bfx), G(\bfy) \} \]
		holds for all $\bfx, \bfy \in C$ and $\alpha \in [0, 1]$.
		
		\item[(b)] $G$ is said to be \emph{strictly quasiconvex} on $C$ if the strict inequality
		\[ G((1 - \alpha)\bfx + \alpha \bfy) < \max\{ G(\bfx), G(\bfy) \} \]
		holds for all $\bfx, \bfy \in C$ with $\bfx \neq \bfy$ and $\alpha \in (0, 1)$.
		
		\item[(c)] $G$ is said to be \emph{semistrictly quasiconvex} on $C$ if the strict inequality
		\[ G((1 - \alpha)\bfx + \alpha \bfy) < \max\{ G(\bfx), G(\bfy) \} \]
		holds for all $\bfx, \bfy \in C$ with $G(\bfx) \neq G(\bfy)$ and $\alpha \in (0, 1)$.
	\end{itemize}
\end{definition}

Let $G$ be a continuous quasiconvex function on $C$. Then $G$ is semistrictly quasiconvex if and only if every local minimum of $G$ is a global minimum \cite[Theorem 2.3.5]{cambini-martein}. We now describe a few equivalent characterizations of quasiconvexity. First, $G$ is quasiconvex if and only if a $\beta$-sublevel set
\[ L_{\beta}^{-}(G) = \set{\bfx \in C}{G(\bfx) \leq \beta} \]
is a convex set in $\V$ for every $\beta \in \R$. Also, in the case that $C$ is an open convex set in $\V$ and $G$ is Fr{\'e}chet differentiable on $C$, $G$ is quasiconvex if and only if
\[ G(\bfy) \leq G(\bfx) \implies \ip{\nabla G(\bfx)}{\bfy - \bfx} \leq 0 \]
for all $\bfx, \bfy \in C$. Likewise, under the additional assumption that $\nabla G(\bfx) \neq \mathbf{0}$ for all $\bfx \in C$, $G$ is strictly quasiconvex if and only if
\[ G(\bfy) \leq G(\bfx) \implies \ip{\nabla G(\bfx)}{\bfy - \bfx} < 0 \]
for all $\bfx, \bfy \in C$ with $\bfx \neq \bfy$. This characterization motivates the following definition of pseudoconvexity.

\begin{definition}[Pseudoconvexity] \label{def:pseudoconvexity}
	Let $C$ be an open convex set in $\V$ and consider a Fr{\'e}chet differentiable function $G : C \to \R$.
	\begin{itemize}
		\item[(a)] $G$ is said to be \emph{pseudoconvex} on $C$ if the implication
		\[ G(\bfy) < G(\bfx) \implies \ip{\nabla G(\bfx)}{\bfy - \bfx} < 0 \]
		holds for all $\bfx, \bfy \in C$.
		
		\item[(b)] $G$ is said to be \emph{strictly pseudoconvex} on $C$ if the implication
		\[ G(\bfy) \leq G(\bfx) \implies \ip{\nabla G(\bfx)}{\bfy - \bfx} < 0 \]
		holds for all $\bfx, \bfy \in C$ with $\bfx \neq \bfy$.
	\end{itemize}
\end{definition}

If $G$ is pseudoconvex, then every critical point is a global minimum. Indeed, with the assumption that $G$ is quasiconvex, $G$ is pseudoconvex if and only if every critical point is a global minimum \cite[Theorems 3.2.5 and 3.2.9]{cambini-martein}. Furthermore, there exists at most one critical point when $G$ is strictly pseudoconvex. 

\gap
The diagram below summarizes the implications among notions of convexity.
\begin{center}
	\includegraphics[scale=0.667]{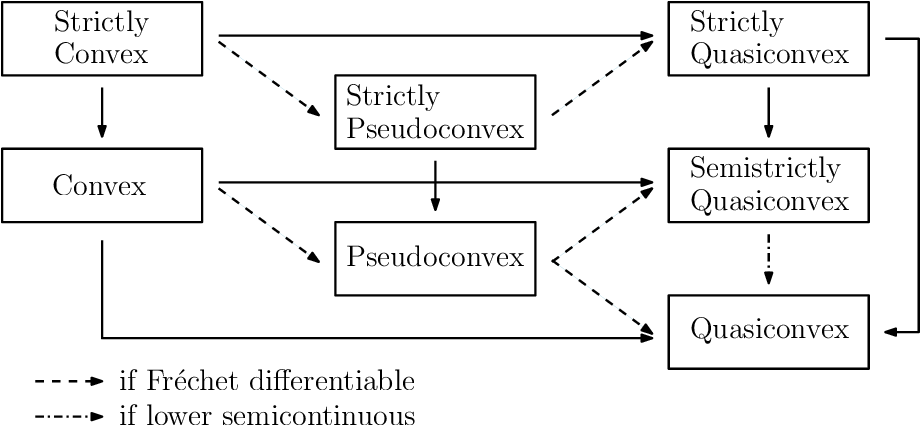}
\end{center}

\section{Transfer principles for generalized convexity}

In this section, we describe transfer principles for generalized convexity.

\begin{theorem} \label{thm:convexity}
	Let $C = \lambda^{-1}(Q)$ be a spectral convex set in $\V$ and $G : C \to \R$ be a spectral function with the associated symmetric function $f : Q \to \R$. 
	\begin{itemize}
		\item[$(i)$] $G$ is convex on $C$ if and only if $f$ is convex on $Q$.
		\item[$(ii)$] $G$ is strictly convex on $C$ if and only if $f$ is strictly convex on $Q$.
	\end{itemize}
\end{theorem}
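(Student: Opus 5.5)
The plan is to prove each direction separately. The passage from $G$ to $f$ will use an embedding of $\Rn$ into $\V$ via a fixed Jordan frame. The passage from $f$ to $G$ will use majorization together with Schur-convexity.

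\emph{From $G$ to $f$.} Fix a Jordan frame $\{\bfe_1,\dots,\bfe_n\}$ and define the linear isometry $\Phi:\Rn\to\V$ by $\Phi(u)=\sum_i u_i\bfe_i$. Then $\lambda(\Phi(u))=u^{\downarrow}$. Since $Q$ is symmetric, $u\in Q$ if and only if $\Phi(u)\in C$, so $Q=\Phi^{-1}(C)$ is convex. Because $f$ is symmetric, $f(u)=f(u^{\downarrow})=G(\Phi(u))$. Thus $f=G\circ\Phi$ is the composition of a convex (resp.\ strictly convex) function with an injective linear map, and it inherits convexity (resp.\ strict convexity), injectivity being what is needed for $u\ne v\Rightarrow\Phi(u)\ne\Phi(v)$.

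\emph{From $f$ to $G$, part (i).} Let $\bfx,\bfy\in C$ and $\alpha\in[0,1]$, and set $\bfz=(1-\alpha)\bfx+\alpha\bfy$. The majorization inequality gives
\[ \lambda(\bfz)\prec(1-\alpha)\lambda(\bfx)+\alpha\lambda(\bfy)=:w, \]
and $w\in Q$ by convexity of $Q$. Write $\lambda(\bfz)=\sum_j\beta_jP_jw$ as in the Birkhoff/Hardy--Littlewood--P\'olya description. By symmetry and convexity of $f$,
\[ f(\lambda(\bfz))\le\sum_j\beta_jf(P_jw)=f(w)\le(1-\alpha)f(\lambda(\bfx))+\alpha f(\lambda(\bfy)). \]
This is the convexity inequality for $G$.

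\emph{From $f$ to $G$, part (ii).} This is the main obstacle. Let $\bfx\ne\bfy$ and $\alpha\in(0,1)$. If $\lambda(\bfx)\ne\lambda(\bfy)$, strict convexity of $f$ makes the last inequality above strict, and we are done. So assume $\lambda(\bfx)=\lambda(\bfy)=:u$. Then $w=u$, and it suffices to show $\lambda(\bfz)\ne u$, i.e.\ that $\lambda(\bfz)^{\downarrow}\neq w^{\downarrow}$. Granting that, strict convexity plus symmetry makes $f$ strictly Schur-convex: in $\lambda(\bfz)=\sum_j\beta_jP_jw$ at least two distinct vectors $P_jw$ occur with positive weight, so Jensen's inequality is strict. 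This yields $f(\lambda(\bfz))<f(w)$.

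To show $\lambda(\bfz)\ne u$, argue by contradiction. If $\lambda(\bfz)=u=(1-\alpha)\lambda(\bfx)+\alpha\lambda(\bfy)=\lambda((1-\alpha)\bfx)+\lambda(\alpha\bfy)$, then Proposition~\ref{prop:strongly_operator_commute}(b)$\Rightarrow$(c) gives
\[ \norm{(1-\alpha)\bfx-\alpha\bfy}=\norm{(1-\alpha)u-\alpha u}. \]
Squaring, and using $\norm{\bfx}=\norm{\bfy}=\norm{u}$, this forces $\ip{\bfx}{\bfy}=\norm{u}^2$. Then $\norm{\bfx-\bfy}^2=2\norm{u}^2-2\norm{u}^2=0$, contradicting $\bfx\ne\bfy$. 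Hence $\lambda(\bfz)\ne u$, and the strict inequality for $G$ follows.
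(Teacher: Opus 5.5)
Your proof is correct and follows essentially the argument the paper has in mind: the paper only cites Baes, Jeong--Gowda and Jeong--Sossa for this theorem, and it names (strict) Schur-convexity of symmetric (strictly) convex functions as the key idea. Your three ingredients are exactly the template the paper then uses to prove Theorem~\ref{thm:quasiconvexity}: the fixed-Jordan-frame embedding for $G \Rightarrow f$, Ky Fan majorization for $f \Rightarrow G$, and the case $\lambda(\bfx)=\lambda(\bfy)$ handled via Proposition~\ref{prop:strongly_operator_commute}. You also correctly derive convexity of $Q$ from that of $C$ via $Q=\Phi^{-1}(C)$, which the statement needs.
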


\begin{proof}
	In \cite[Theorem 41]{baes}, Baes observed that the convexity of $f$ can be transferred to $G$. Later, in \cite[Theorem 6]{jeong-gowda_spectralset}, Jeong and Gowda demonstrated that convexity can indeed be transferred between $f$ and $G$ in both directions. The (two-sided) transfer principle for strict convexity has been observed and proved in \cite[Proposition 6]{jeong-sossa_commutation}.
\end{proof}

The main idea for verifying the above theorem is the fact that symmetric (strictly) convex functions are (strictly) Schur-convex. Fortunately, an analogous statement remains valid for symmetric (strictly) quasiconvex functions.

\begin{lemma} \label{lem:quasiconvex_function}
	Let $Q$ be a convex and symmetric set in $\Rn$, $f : Q \to \R$ be a symmetric function, and $u, v \in Q$. If $f$ is quasiconvex and $u \prec v$, then $f(u) \leq f(v)$. Moreover, if $f$ is strictly quasiconvex and $u \prec v$ with $u^{\downarrow} \neq v^{\downarrow}$, then $f(u) < f(v)$.
\end{lemma}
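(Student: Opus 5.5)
The plan is to use the Birkhoff/Hardy--Littlewood--P\'olya characterization recalled in the preliminaries. Since $u \prec v$, we can write $u = \sum_{i=1}^{k} \alpha_i P_i v$ with $\alpha_i > 0$ summing to one and permutation matrices $P_i$. Because $Q$ is symmetric, each $P_i v$ lies in $Q$, and by symmetry of $f$ we have $f(P_i v) = f(v)$ for every $i$. The task then reduces to a statement about convex combinations of points that all share the same function value $f(v)$.

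For the quasiconvex part, we extend the two-point inequality in Definition~\ref{def:quasiconvexity}(a) to finite convex combinations by induction on $k$. The standard step writes
\[ \sum_{i=1}^{k} \alpha_i w_i = (1-\alpha_k)\, z + \alpha_k w_k, \qquad z = \sum_{i=1}^{k-1} \tfrac{\alpha_i}{1-\alpha_k} w_i, \]
where $z \in Q$ by convexity of $Q$. This gives
\[ f\Big(\sum_i \alpha_i w_i\Big) \leq \max_i f(w_i). \]
Applying it with $w_i = P_i v$ yields $f(u) \leq \max_i f(P_i v) = f(v)$.

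For the strict part, assume $u^{\downarrow} \neq v^{\downarrow}$. First we remove duplicate points among the $P_i v$ by merging equal terms, which keeps the coefficients positive. Then $u$ is a convex combination with positive weights of distinct vectors $w_1, \ldots, w_m$, each a permutation of $v$. Since $u$ is not a permutation of $v$, we get $m \geq 2$; otherwise $u = w_1$ would be a permutation of $v$.

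Now we split $u = (1-\alpha_m) z + \alpha_m w_m$ with $\alpha_m \in (0,1)$ and $z$ as above. By the first part, $f(z) \leq \max_{i<m} f(w_i) = f(v)$. Whenever $z \neq w_m$, strict quasiconvexity gives
\[ f(u) < \max\{f(z), f(w_m)\} \leq f(v). \]

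The main obstacle is ensuring $z \neq w_m$. If $z = w_m$, then $u = w_m$ would be a permutation of $v$, contradicting $u^{\downarrow} \neq v^{\downarrow}$. So the condition holds automatically, and the strict inequality follows without any further case analysis.
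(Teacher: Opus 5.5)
Your proposal is correct and takes essentially the same route as the paper: write $u$ as a convex combination of permutations of $v$, split off one term (you take the last one, the paper the first), observe that equality of the two pieces would make $u$ a permutation of $v$, and bound the remaining piece by $f(v)$ via the quasiconvex case. Your explicit induction for finite convex combinations supplies a step the paper leaves implicit. The merging of duplicate terms is harmless but unnecessary, since $k \geq 2$ already follows from $u^{\downarrow} \neq v^{\downarrow}$.
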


\begin{proof}
	The proof of the quasiconvexity case can be found in \cite[Theorem 3.C.3]{marshal-olkin-arnold}. We include our own proof here for completeness. Let $u, v \in Q$ such that $u \prec v$. By Hardy–Littlewood–P{\'o}lya theorem and Birkhoff's theorem, we may write
	\[ u = \sum_{i=1}^{k} \alpha_i P_i v \]
	for some $\alpha_1, \alpha_2, \ldots, \alpha_k > 0$ summing to one and permutation matrices $P_1, P_2, \ldots, P_k$. First, suppose $f$ is symmetric and quasiconvex. Then
	\[ f(u) = f \bigg( \sum_{i=1}^{k} \alpha_i P_i v \bigg) \leq \max_{1 \leq i \leq k} \big\{ f(P_i v) \big\} = \max_{1 \leq i \leq k} \big\{ f(v) \big\} = f(v). \]
	
	For the second assertion, suppose further that $f$ is strictly quasiconvex and that $u^\downarrow \neq v^\downarrow$. Since $u^\downarrow \neq v^\downarrow$, we must have $k \geq 2$. Define
	\[ w = \sum_{i = 2}^k \frac{\alpha_i}{1 - \alpha_1} P_i v. \]
	Because $Q$ is convex and symmetric, we have $w \in Q$. Moreover, $w \prec v$. If $P_1v = w$, then
	\[ u = \alpha_1 P_1 v + (1 - \alpha_1) w = \alpha_1 P_1 v + (1 - \alpha_1) P_1 v = P_1 v, \]
	which implies $u^\downarrow = v^\downarrow$, a contradiction. Hence,
	$P_1 v \neq w$. Then strict quasiconvexity of $f$ implies
	\[ f(u) = f \big( \alpha_1 P_1 v + (1 - \alpha_1) w \big) < \max \{f(P_1v), f(w)\}. \]
	Since $f$ is symmetric, $f(P_1v) = f(v)$. Also, as strict quasiconvexity
	implies quasiconvexity, the first assertion of the lemma, applied to
	$w \prec v$, yields $f(w) \leq f(v)$. Thus,
	\[ f(u) < \max \{f(P_1v), f(w)\} \leq \max \{f(v), f(v)\} = f(v). \]
	This completes the proof.
\end{proof}

\begin{theorem} \label{thm:quasiconvexity}
	Let $Q$ be a convex symmetric set in $\Rn$ and $f : Q \to \R$ be a symmetric function. Define $C = \lambda^{-1}(Q)$ and $G : C \to \R$ by $G = f \circ \lambda$. Then the following hold:
	\begin{itemize}
		\item[$(i)$] $G$ is quasiconvex on $C$ if and only if $f$ is quasiconvex on $Q$.
		\item[$(ii)$] $G$ is strictly quasiconvex on $C$ if and only if $f$ is strictly quasiconvex on $Q$.
		\item[$(iii)$] Suppose, in addition, that either $G$ or $f$ is quasiconvex. Then $G$ is semistrictly quasiconvex on $C$ if and only if $f$ is semistrictly quasiconvex on $Q$.
	\end{itemize}
\end{theorem}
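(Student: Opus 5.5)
Both directions rest on two facts: the restriction principle, that $f$ is the restriction of $G$ to a copy of $\Rn$ inside $\V$, and Lemma \ref{lem:quasiconvex_function} combined with the majorization $\lambda((1-\alpha)\bfx+\alpha\bfy)\prec (1-\alpha)\lambda(\bfx)+\alpha\lambda(\bfy)$.

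\textbf{From $G$ to $f$.} Fix a Jordan frame $\{\bfe_1,\ldots,\bfe_n\}$ and define the linear injection $\phi(u)=\sum_i u_i\bfe_i$. Then $\lambda(\phi(u))=u^\downarrow$, so $\phi(Q)\subseteq C$ (as $Q$ is symmetric) and $G(\phi(u))=f(u)$ by symmetry of $f$. Any inequality involving $G$ along the segment $[\phi(u),\phi(v)]=\phi([u,v])$ therefore becomes the same inequality for $f$ along $[u,v]$. The hypotheses $u\neq v$ and $f(u)\neq f(v)$ are preserved, since $\phi$ is injective and $G\circ\phi=f$. Thus quasiconvexity, strict quasiconvexity and semistrict quasiconvexity of $G$ restrict to $f$. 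This direction needs no extra hypothesis.

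\textbf{From $f$ to $G$, cases (i) and (ii).} Let $\bfx,\bfy\in C$, $\alpha\in(0,1)$, and $\bfz=(1-\alpha)\bfx+\alpha\bfy$. Set $w=(1-\alpha)\lambda(\bfx)+\alpha\lambda(\bfy)$, which lies in $Q$ by convexity. The majorization inequality $\lambda(\bfa+\bfb)\prec\lambda(\bfa)+\lambda(\bfb)$, together with $\lambda(t\bfa)=t\lambda(\bfa)$ for $t\ge0$, gives $\lambda(\bfz)\prec w$. Lemma \ref{lem:quasiconvex_function} then yields $G(\bfz)=f(\lambda(\bfz))\le f(w)\le\max\{f(\lambda(\bfx)),f(\lambda(\bfy))\}=\max\{G(\bfx),G(\bfy)\}$, which is (i).

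For (ii), take $\bfx\neq\bfy$ and split into two cases.
\begin{itemize}
\item If $\lambda(\bfx)\neq\lambda(\bfy)$, strict quasiconvexity gives $f(w)<\max\{f(\lambda(\bfx)),f(\lambda(\bfy))\}$ directly.
\item If $\lambda(\bfx)=\lambda(\bfy)$, then $w=\lambda(\bfx)$. Here equality $\lambda(\bfz)^\downarrow=w$ is impossible. Indeed, it would mean $\lambda(\bfz)=(1-\alpha)\lambda(\bfx)+\alpha\lambda(\bfy)$, so $(1-\alpha)\bfx$ and $\alpha\bfy$ strongly operator commute by Proposition \ref{prop:strongly_operator_commute}(b). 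By (c) they would then satisfy $\|(1-\alpha)\bfx-\alpha\bfy\|=\|(1-\alpha)\lambda(\bfx)-\alpha\lambda(\bfy)\|$, and hence $\bfx=\bfy$ once we use a simultaneous ordered decomposition with equal eigenvalues. So $\lambda(\bfz)\prec w$ with $\lambda(\bfz)^\downarrow\neq w^\downarrow$, and the strict part of Lemma \ref{lem:quasiconvex_function} gives $G(\bfz)<f(w)=G(\bfx)$.
\end{itemize}
The second case is where the real work lies. One could instead argue uniformly that if $\lambda(\bfz)=w$, strong commutativity puts $\bfx,\bfy$ in a common ordered frame, which reduces to $f$ on $\phi(Q)$. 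I would present it this way.

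\textbf{Case (iii).} Assume $f$ is semistrictly quasiconvex and quasiconvex; if the hypothesis is instead that $G$ is quasiconvex, the restriction direction shows $f$ is quasiconvex as well. Take $G(\bfx)\neq G(\bfy)$, say $G(\bfx)<G(\bfy)$. Then $\lambda(\bfx)\neq\lambda(\bfy)$, so semistrictness applied to $f$ on $[\lambda(\bfx),\lambda(\bfy)]$ gives $f(w)<\max=G(\bfy)$. By (i), which uses the quasiconvexity of $f$ through Lemma \ref{lem:quasiconvex_function}, we get $G(\bfz)\le f(w)$. This explains the role of the extra hypothesis: semistrict quasiconvexity alone does not give Schur monotonicity. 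The main obstacle overall is the equal-eigenvalue subcase of (ii), where strictness must come from the strict majorization rather than from $f$ along the segment.
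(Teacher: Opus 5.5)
Your proof is correct and follows the paper's argument essentially step for step: the Ky Fan majorization combined with Lemma~\ref{lem:quasiconvex_function} for the direction from $f$ to $G$, the fixed-Jordan-frame embedding for the direction from $G$ to $f$, and in (ii) the same split on whether $\lambda(\bfx)=\lambda(\bfy)$, where strictness in the equal-eigenvalue subcase comes from $(1-\alpha)\bfx$ and $\alpha\bfy$ failing to strongly operator commute. Your alternative split on whether $\lambda(\bfz)=w$ is an equally valid rearrangement of the same ingredients.
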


\begin{proof}
	For any $\bfx, \bfy \in C$ and $\alpha \in [0, 1]$, Ky-Fan inequality and positive homogeneity of $\lambda$ imply that
	\[ \lambda((1 - \alpha)\bfx + \alpha \bfy) \prec \lambda((1 - \alpha) \bfx) + \lambda(\alpha \bfy) = (1 - \alpha) \lambda(\bfx) + \alpha \lambda(\bfy). \]
	
	\gap
	
	$(i)$ It has been observed \cite[Theorem 41]{baes} that quasiconvexity of $f$ is transferred to $G$. For completeness, we include a proof using majorization techniques. Suppose $f$ is quasiconvex, $\bfx, \bfy \in C$, and $\alpha \in [0, 1]$. Then
	\begin{subequations} \label{eq:quasiconvex_inequality}
		\begin{align}
			G \big( (1 - \alpha)\bfx + \alpha \bfy \big) 
			& = f \big( \lambda( (1 - \alpha) \bfx + \alpha \bfy) \big) \notag \\ 
			& \leq f \big( (1 - \alpha)\lambda(\bfx) + \alpha  \lambda(\bfy) \big) \label{eq:quasiconvex_inequality_1} \\ 
			& \leq \max \big\{ f \big( \lambda(\bfx) \big), f \big( \lambda(\bfy) \big) \big\} \label{eq:quasiconvex_inequality_2} \\ 
			& = \max \big\{ G(\bfx), G(\bfy) \big\}, \notag 
		\end{align}
	\end{subequations}
	where the inequality \eqref{eq:quasiconvex_inequality_1} is due to \autoref{lem:quasiconvex_function} and the inequality \eqref{eq:quasiconvex_inequality_2} is due to quasiconvexity of $f$. This shows that $G$ is quasiconvex.
	
	Conversely, assume that $G$ is quasiconvex. Take $u, v \in Q$ and $\alpha \in [0, 1]$ arbitrary. Write $w = (1 - \alpha) u + \alpha v \in Q$. Fix a Jordan frame $\{\bfe_1, \bfe_2, \ldots, \bfe_n\}$ and define $\bfx, \bfy, \bfz \in C$ as follows
	\begin{equation} \label{eq:quasiconvex_xyz}
		\begin{aligned} 
			\bfx & = \sum_{i=1}^{n} u_i \bfe_i, \quad \bfy = \sum_{i=1}^{n} v_i \bfe_i, \\
			\bfz & = \sum_{i=1}^{n} w_i \bfe_i = \sum_{i=1}^{n} \big[ (1 - \alpha) u_i + \alpha v_i \big] \bfe_i = (1 - \alpha) \bfx + \alpha \bfy.
		\end{aligned}
	\end{equation}
	Note that $\lambda(\bfx) = u^{\downarrow}$, $\lambda(\bfy) = v^{\downarrow}$, and $\lambda(\bfz) = w^{\downarrow}$. Since $f$ is symmetric,
	\begin{subequations} \label{eq:quasiconvex_inequality_converse}
		\begin{align}
			f(w) & = f(w^{\downarrow}) = G(\bfz) \notag \\
			& \leq \max \big\{ G(\bfx), G(\bfy) \big\} \label{eq:quasiconvex_inequality_converse_1} \\
			& = \max \big\{ f(u^{\downarrow}), f(v^{\downarrow}) \big\} = \max \big\{ f(u), f(v) \big\}. \notag 
		\end{align}
	\end{subequations}
	Thus, $f$ is quasiconvex.
	
	\gap
	
	$(ii)$ Suppose $f$ is strictly quasiconvex, $\bfx, \bfy \in C$ with $\bfx \neq \bfy$, and $\alpha \in (0, 1)$. If $\lambda(\bfx) \neq \lambda(\bfy)$, the strict quasiconvexity of $f$ forces the inequality \eqref{eq:quasiconvex_inequality_2} to be strict. On the other hand, if $\lambda(\bfx) = \lambda(\bfy)$, we see that $\bfx$ and $\bfy$ cannot strongly operator commute because otherwise we have $0 = \norm{\lambda(\bfx) - \lambda(\bfy)} = \norm{\bfx - \bfy}$ from \autoref{prop:strongly_operator_commute}, which contradict the assumption that $\bfx \neq \bfy$. Thus, $\bfx$ and $\bfy$ as well as $(1 - \alpha)\bfx$ and $\alpha \bfy$ do not strongly operator commute. Then
	\[ \lambda \big( (1 - \alpha)\bfx + \alpha \bfy \big) \neq \lambda \big( (1 - \alpha) \bfx \big) + \lambda(\alpha \bfy) = (1 - \alpha)\lambda(\bfx) + \alpha \lambda(\bfy). \]
	Hence, by \autoref{lem:quasiconvex_function} applied to $u = \lambda \big( (1 - \alpha)\bfx + \alpha \bfy \big)$ and $v = (1 - \alpha)\lambda(\bfx) + \alpha \lambda(\bfy)$, the inequality \eqref{eq:quasiconvex_inequality_1} becomes strict.
	
	For the converse, assume $G$ is strictly quasiconvex. Take $u, v \in Q$ with $u \neq v$, and $\alpha \in (0, 1)$. Define $\bfx, \bfy, \bfz \in \V$ as in \eqref{eq:quasiconvex_xyz}. Since $\bfx \neq \bfy$ as $u \neq v$, we have strict inequality in \eqref{eq:quasiconvex_inequality_converse_1}, implying that $f$ is strictly quasiconvex.
	
	\gap
	
	$(iii)$ By part $(i)$, the additional assumption implies that both $f$ and $G$ are quasiconvex. 
	
	Suppose that $f$ is semistrictly quasiconvex. Now take $\bfx, \bfy \in C$ with $G(\bfx) < G(\bfy)$ and $\alpha \in (0, 1)$. Since $f(\lambda(\bfx)) < f(\lambda(\bfy))$, semistrict quasiconvexity of $f$ ensures that the inequality \eqref{eq:quasiconvex_inequality_2} is strict.
	
	To see the converse, assume that $G$ is semistrictly quasiconvex. Let $u, v \in Q$ be such that $f(u) < f(v)$, and let $\alpha \in (0, 1)$. Define $\bfx, \bfy, \bfz \in \V$ as in \eqref{eq:quasiconvex_xyz}. Then
	\begin{equation} \label{eq:transfer_inequality}
		G(\bfx) = f(\lambda(\bfx)) = f(u^{\downarrow}) = f(u) < f(v) = f(v^{\downarrow}) = f(\lambda(\bfy)) = G(\bfy)
	\end{equation}
	It follows that the inequality \eqref{eq:quasiconvex_inequality_converse_1} is strict; thus $f$ is semistrictly quasiconvex.
\end{proof}

\begin{example}
	Consider the spectral convex set $C = \operatorname{int}(\V_+)$ and the function $G : C \to \R$ given by
	\[ G(\bfx) = \frac{\lambda_{\max}(\bfx)}{\lambda_{\min}(\bfx)}. \]
	This function is known as the \emph{spectral condition number} in the literature \cite{massey-rios-sossa}. Also, $G$ is a spectral function with its corresponding symmetric function $f : \operatorname{int}(\Rn_+) \to \R$ given by $f(u) = \frac{\max_i u_i}{\min_i u_i}$. Since $f$ is quasiconvex, the transfer principle for quasiconvexity verifies that $G$ is quasiconvex on $C$.
\end{example}

\begin{remark}
	In the special case $Q = \Rn$, one may also prove the transfer principle for quasiconvexity via a sublevel set description. For $f : \Rn \to \R$ and $G : \V \to \R$ defined by $G = f \circ \lambda$, it is readily seen that
	\[ L_{\beta}^{-}(G) = \lambda^{-1} \big( L_{\beta}^{-}(f) \big). \]
	It follows that the sublevel set $L_{\beta}^{-}(G)$ is a spectral set with its associated symmetric set $L_{\beta}^{-}(f)$. Now, from the transfer principle for convexity, we have
	\begin{align*}
		\text{$G$ is a quasiconvex function on $\V$} & \iff \text{$L_{\beta}^{-}(G)$ is a convex set in $\V$} \\
		& \iff \text{$L_{\beta}^{-}(f)$ is a convex set in $\Rn$} \\
		& \iff \text{$f$ is a quasiconvex function on $\Rn$}
	\end{align*}
	This proves the transfer principle for quasiconvexity.
\end{remark}

Let $Q$ be an open symmetric set and $f : Q \to \R$ be a symmetric function. If $f$ is differentiable at $u \in Q$, then the following implication holds; see \cite[Remark 37]{baes}.
\begin{equation} \label{eq:gradient_equality_forward}
	u_i = u_j \implies [\nabla f(u)]_i = [\nabla f(u)]_j.
\end{equation}

We first record a modification of \autoref{prop:transfer_differentiability} under the (quasi)convexity assumption.

\begin{lemma} \label{lem:gradient_order_quasiconvex}
	Let $Q \subseteq \Rn$ be an open convex symmetric set and $f : Q \to \R$ be a differentiable symmetric quasiconvex function. Define a spectral set $C = \lambda^{-1}(Q)$ and a spectral function $G : C \to \R$ by $G = f \circ \lambda$. Then, for every $\bfx \in C$, $\bfx$ and $\nabla G(\bfx)$ strongly operator commute. In particular,
	\[ \lambda(\nabla G(\bfx)) = \nabla f(\lambda(\bfx)). \]
\end{lemma}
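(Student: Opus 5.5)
The plan is to show that, at a point $u = \lambda(\bfx)$ with decreasingly ordered entries, the gradient $\nabla f(u)$ is also decreasingly ordered. \autoref{prop:transfer_differentiability} then turns this directly into a simultaneous \emph{ordered} spectral decomposition of $\bfx$ and $\nabla G(\bfx)$. The only place quasiconvexity enters is the first-order characterization recorded in Section 2: for a differentiable function on an open convex set, $f(v) \leq f(u)$ implies $\ip{\nabla f(u)}{v - u} \leq 0$. We apply it on $Q \subseteq \Rn$, viewing $\Rn$ as a Euclidean Jordan algebra or simply using the classical statement.

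Fix $\bfx \in C$ with complete spectral decomposition $\bfx = \sum_{i=1}^n \lambda_i \bfe_i$, and put $u = \lambda(\bfx) \in Q$, so $u_1 \geq \cdots \geq u_n$. Take indices $i < j$.
\begin{itemize}
\item If $u_i = u_j$, then \eqref{eq:gradient_equality_forward} gives $[\nabla f(u)]_i = [\nabla f(u)]_j$.
\item If $u_i > u_j$, let $v$ be $u$ with its $i$th and $j$th entries swapped. Since $Q$ is symmetric, $v \in Q$, and since $f$ is symmetric, $f(v) = f(u)$. In particular $f(v) \leq f(u)$, so quasiconvexity yields
\[ 0 \geq \ip{\nabla f(u)}{v - u} = (u_j - u_i)\big([\nabla f(u)]_i - [\nabla f(u)]_j\big). \]
Because $u_j - u_i < 0$, this gives $[\nabla f(u)]_i \geq [\nabla f(u)]_j$.
\end{itemize}
Hence $d := \nabla f(u)$ satisfies $d_1 \geq d_2 \geq \cdots \geq d_n$.

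By \autoref{prop:transfer_differentiability}, $G$ is differentiable at $\bfx$ and $\nabla G(\bfx) = \sum_{i=1}^n d_i \bfe_i$, using the same Jordan frame as for $\bfx$. Since $d$ is decreasing, this is a complete spectral decomposition of $\nabla G(\bfx)$, so $\lambda(\nabla G(\bfx)) = d = \nabla f(\lambda(\bfx))$. Orthonormality of the frame then gives
\[ \ip{\bfx}{\nabla G(\bfx)} = \sum_{i} \lambda_i d_i = \ip{\lambda(\bfx)}{\lambda(\nabla G(\bfx))}. \]
This is equality in the von Neumann-type inequality, which is the definition of strong operator commutativity; equivalently it is condition $(a)$ of \autoref{prop:strongly_operator_commute}.

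The only step requiring care is the ordering of the gradient. The crux is choosing the transposed point $v$, which has the same function value and for which $v - u$ is a multiple of $e_i - e_j$. Everything else is bookkeeping from \autoref{prop:transfer_differentiability}.
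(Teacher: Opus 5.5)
Your proposal is correct and follows essentially the same route as the paper: both swap the $i$th and $j$th entries of $u=\lambda(\bfx)$, apply the first-order characterization of quasiconvexity to conclude that $\nabla f(u)$ is decreasingly ordered, and then read off an ordered simultaneous spectral decomposition from \autoref{prop:transfer_differentiability}. The only differences are minor. You check strong operator commutativity directly, as equality in the von Neumann-type inequality, rather than citing its equivalence with simultaneous ordered decomposition. You also note explicitly that $v\in Q$ by symmetry of $Q$, which the paper leaves implicit.
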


\begin{proof}
	Let $\bfx = \sum_{i=1}^{n} \lambda_i \bfe_i$, where $\lambda_1 \geq \lambda_2 \geq \cdots \geq \lambda_n$. By \autoref{prop:transfer_differentiability},
	\[ \nabla G(\bfx) = \sum_{i=1}^{n} [\nabla f(\lambda(\bfx))]_i \bfe_i. \]
	Thus it remains to show that the components of $\nabla f(\lambda(\bfx))$ are arranged in decreasing order. Write $u = \lambda(\bfx)$. If $u_i = u_j$, then $[\nabla f(u)]_i = [\nabla f(u)]_j$ from \eqref{eq:gradient_equality_forward}. If $u_i > u_j$, let $v$ be obtained from $u$ by swapping the $i$th and $j$th entries and keeping the remaining entries the same. By symmetry of $f$, $f(v) = f(u)$. Since $f$ is differentiable and quasiconvex,
	\begin{align*}
		0 \geq \ip{\nabla f(u)}{v - u} 
		& = [\nabla f(u)]_i (v_i - u_i) + [\nabla f(u)]_j (v_j - u_j) \\
		& = \big( [\nabla f(u)]_i - [\nabla f(u)]_j \big) \big( u_j - u_i \big)
	\end{align*}
	Since $u_j - u_i < 0$, we obtain $[\nabla f(u)]_i \geq [\nabla f(u)]_j$. Thus the entries of $\nabla f(u)$ are in decreasing order, and the displayed decomposition of $\nabla G(\bfx)$ is an ordered simultaneous spectral decomposition with $\bfx$. Hence the result follows.
\end{proof}

The following lemma shows that the converse implication of \eqref{eq:gradient_equality_forward} holds when $f$ is assumed, in addition, to be strictly pseudoconvex.

\begin{lemma} \label{lem:pseudoconvex_lemma_1}
	Let $Q$ be an open convex symmetric set in $\R^n$ and $f : Q \to \R$ be a symmetric strictly pseudoconvex function. Then, for every $u \in Q$,
	\begin{equation} \label{eq:gradient_equality_backward}
		[\nabla f(u)]_i = [\nabla f(u)]_j \implies u_i = u_j.
	\end{equation}
\end{lemma}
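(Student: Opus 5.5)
We argue by contraposition: assuming $u_i \neq u_j$, we show $[\nabla f(u)]_i \neq [\nabla f(u)]_j$. Pseudoconvexity is only defined for differentiable functions, so $\nabla f(u)$ exists. The natural comparison point, as in the proof of \autoref{lem:gradient_order_quasiconvex}, is the transposed vector $v$, obtained from $u$ by swapping its $i$th and $j$th entries. Since $Q$ is symmetric, $v \in Q$.

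Suppose $u_i \neq u_j$. Then $v \neq u$, and $f(v) = f(u)$ because $f$ is symmetric. In particular $f(v) \leq f(u)$ holds, so \autoref{def:pseudoconvexity}(b) applies to the pair $(u, v)$ and gives
\[ 0 > \ip{\nabla f(u)}{v - u} = \big( [\nabla f(u)]_i - [\nabla f(u)]_j \big) \big( u_j - u_i \big). \]
The equality is the same componentwise computation as in \autoref{lem:gradient_order_quasiconvex}, since $v - u$ is supported only on coordinates $i$ and $j$.

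The product on the right is strictly negative, so neither factor can vanish. Hence $[\nabla f(u)]_i \neq [\nabla f(u)]_j$, which is the contrapositive of \eqref{eq:gradient_equality_backward}. The same inequality also shows that $[\nabla f(u)]_i - [\nabla f(u)]_j$ has the same sign as $u_i - u_j$. This strict ordering may be useful later.

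The only real obstacle is noticing that strict pseudoconvexity allows the non-strict hypothesis $f(v) \leq f(u)$. That is exactly what the symmetric swap provides, with equality. Ordinary pseudoconvexity would not suffice, because it requires $f(v) < f(u)$. Convexity of $Q$ is not needed beyond what the definition of pseudoconvexity already presupposes.
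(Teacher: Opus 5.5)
Your proof is correct and matches the paper's: both swap the $i$th and $j$th entries to get $v\neq u$ with $f(v)=f(u)$, and use $\ip{\nabla f(u)}{v-u}=\big([\nabla f(u)]_i-[\nabla f(u)]_j\big)(u_j-u_i)$ together with strict pseudoconvexity. The paper argues by contradiction (this inner product would be $0$), whereas your contrapositive version also gives the strict sign relation between $[\nabla f(u)]_i-[\nabla f(u)]_j$ and $u_i-u_j$.
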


\begin{proof}
	Assume, to the contrary, that $[\nabla f(u)]_i = [\nabla f(u)]_j$ but $u_i \neq u_j$. Let $v$ be the vector obtained from $u$ by swapping the $i$th and $j$th entries and keeping the remaining entries the same. By the symmetry of $f$, $f(u) = f(v)$. Then
	\begin{align*}
		\ip{\nabla f(u)}{v - u} & = [\nabla f(u)]_i (v_i - u_i) + [\nabla f(u)]_j (v_j - u_j) \\
		& = [\nabla f(u)]_i (u_j - u_i) + [\nabla f(u)]_i (u_i - u_j) = 0
	\end{align*}
	This means that $f$ is not strictly pseudoconvex, a contradiction. Therefore, we have $u_i = u_j$ whenever $[\nabla f(u)]_i = [\nabla f(u)]_j$.
\end{proof}

\begin{lemma} \label{lem:pseudoconvex_lemma_2}
	For $\bfx, \bfy, \bfz \in \V$, suppose $\bfx$ and $\bfy$ strongly operator commute and $\bfy$ and $\bfz$ strongly operator commute. If
	\begin{equation} \label{eq:transitivity_assumption}
		\lambda_i = \lambda_j \iff \mu_i = \mu_j \quad \forall i, j \in \{1, 2, \ldots, n\},
	\end{equation}
	where $\lambda_1 \geq \lambda_2 \geq \cdots \geq \lambda_n$ and $\mu_1 \geq \mu_2 \geq \cdots \geq \mu_n$ are the eigenvalues of $\bfx$ and $\bfy$, respectively, then $\bfx$ and $\bfz$ also strongly operator commute. 
\end{lemma}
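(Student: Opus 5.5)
The approach is to read everything off spectral decompositions and compare the idempotents attached to each distinct eigenvalue. Since $\bfx$ and $\bfy$ strongly operator commute, there is a Jordan frame $\{\bfe_1,\ldots,\bfe_n\}$ with
\[ \bfx = \sum_{i=1}^n \lambda_i \bfe_i \quad \text{and} \quad \bfy = \sum_{i=1}^n \mu_i \bfe_i. \]
Since $\bfy$ and $\bfz$ strongly operator commute, there is a (possibly different) Jordan frame $\{\bfe'_1,\ldots,\bfe'_n\}$ with
\[ \bfy = \sum_{i=1}^n \mu_i \bfe'_i \quad \text{and} \quad \bfz = \sum_{i=1}^n \nu_i \bfe'_i, \]
where $\nu_1\ge\cdots\ge\nu_n$ are the eigenvalues of $\bfz$. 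The goal is to rewrite $\bfx$ in the frame $\{\bfe'_i\}$ with its eigenvalues in decreasing order. That gives a simultaneous ordered decomposition of $\bfx$ and $\bfz$. Then
\[ \ip{\bfx}{\bfz}=\sum_i\lambda_i\nu_i=\ip{\lambda(\bfx)}{\lambda(\bfz)}, \]
using orthonormality of the frame, and this is strong operator commutativity by definition.

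First I would group indices into blocks. Let $I_1,\ldots,I_p$ be the maximal blocks of consecutive indices on which $\mu$ is constant, with values $t_1>\cdots>t_p$. By the uniqueness part of the complete spectral decomposition theorem applied to $\bfy$,
\[ \bfc_k := \sum_{i\in I_k}\bfe_i=\sum_{i\in I_k}\bfe'_i \quad \text{for each } k. \]
This is the key step, and it uses only that both frames give decompositions of $\bfy$ with the same ordered eigenvalues $\mu$.

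Next, hypothesis \eqref{eq:transitivity_assumption} says the blocks of equal $\mu$-values are exactly the blocks of equal $\lambda$-values. So $\lambda$ is constant on each $I_k$, say $\lambda_i=s_k$ for $i\in I_k$, and then
\[ \bfx=\sum_k s_k\bfc_k=\sum_k s_k\sum_{i\in I_k}\bfe'_i=\sum_{i=1}^n\lambda_i\bfe'_i. \]
The eigenvalues in this last expression remain in decreasing order $\lambda_1\ge\cdots\ge\lambda_n$. Therefore $\bfx$ and $\bfz$ have a simultaneous ordered spectral decomposition in the frame $\{\bfe'_i\}$, which finishes the argument.

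The main obstacle is making sure the hypothesis is used in the right direction. We need "$\mu_i=\mu_j\Rightarrow\lambda_i=\lambda_j$", so that $\lambda$ is constant on each $\mu$-block and $\bfx$ can be expressed through the $\bfc_k$. The converse direction is not needed here. I would also double-check that the ordering of the $\lambda_i$ within the new frame is automatically preserved, since the index sets $I_k$ are the same in both frames.
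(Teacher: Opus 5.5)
Your proposal is correct and follows essentially the same argument as the paper. Both group indices into blocks of equal eigenvalues and use uniqueness of the spectral decomposition of $\bfy$ to get $\sum_{i\in I_k}\bfe_i=\sum_{i\in I_k}\bfe'_i$, then rewrite $\bfx$ in the frame $\{\bfe'_i\}$ with its eigenvalues still in decreasing order. Your remark that only the implication $\mu_i=\mu_j \Rightarrow \lambda_i=\lambda_j$ is needed is also right; the paper uses both directions to match the eigenvalue blocks of $\bfx$ and $\bfy$ exactly, but this is not essential.
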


\begin{proof}
	Since $\bfx$ and $\bfy$ strongly operator commute, there exist a common Jordan frame $\{\bfe_1, \bfe_2, \ldots, \bfe_n\}$ such that
	\[ \bfx = \lambda_1 \bfe_1 + \lambda_2 \bfe_2 + \cdots + \lambda_n \bfe_n \quad \text{and} \quad \bfy = \mu_1 \bfe_1 + \mu_2 \bfe_2 + \cdots + \mu_n \bfe_n. \]
	Let $\alpha_1 > \alpha_2 > \cdots > \alpha_p$ and $\beta_1 > \beta_2 > \cdots > \beta_p$ be distinct eigenvalues of $\bfx$ and $\bfy$, respectively. Note that they have the same number of distinct eigenvalues by the assumption \eqref{eq:transitivity_assumption}. Define $I_j = \set{1 \leq i \leq n}{\lambda_i = \alpha_j}$ and $\bfc_j = \sum_{i \in I_j} \bfe_i$ for $1 \leq j \leq p$. Since both eigenvalue sequences are nonincreasing and satisfy \eqref{eq:transitivity_assumption}, $I_j = \set{1 \leq i \leq n}{\mu_i = \beta_j}$ for each $j$. Consequently, we obtain the unique spectral decomposition of $\bfx$ and $\bfy$ as follows.
	\[ \bfx = \alpha_1 \bfc_1 + \alpha_2 \bfc_2 + \cdots + \alpha_p \bfc_p \quad \text{and} \quad \bfy = \beta_1 \bfc_1 + \beta_2 \bfc_2 + \cdots + \beta_p \bfc_p. \]
	Now, since $\bfy$ and $\bfz$ strongly operator commute, there exist real numbers $\nu_1 \geq \nu_2 \geq \cdots \geq \nu_n$ and a Jordan frame $\{\bfe'_1, \bfe'_2, \ldots, \bfe'_n\}$ such that
	\[ \bfy = \mu_1 \bfe'_1 + \mu_2 \bfe'_2 + \cdots + \mu_n \bfe'_n \quad \text{and} \quad \bfz = \nu_1 \bfe'_1 + \nu_2 \bfe'_2 + \cdots + \nu_n \bfe'_n. \]
	By the uniqueness of the spectral decomposition, we have 
	\[ \bfc_j = \sum_{i \in I_j} \bfe_i = \sum_{i \in I_j} \bfe'_i \]
	for all $1 \leq j \leq p$. It follows that
	\[ \bfx = \sum_{j=1}^{p} \alpha_j \bfc_j = \sum_{j=1}^{p} \alpha_j \bigg( \sum_{i \in I_j} \bfe'_i \bigg) = \sum_{j=1}^{p} \sum_{i \in I_j} \alpha_j \bfe'_i = \sum_{j=1}^{p} \sum_{i \in I_j} \lambda_i \bfe'_i = \sum_{i=1}^{n} \lambda_i \bfe'_i. \]
	Hence, $\bfx$ and $\bfz$ have simultaneous ordered spectral decomposition and therefore strongly operator commute.
\end{proof}

\begin{theorem} \label{thm:pseudoconvexity}
	Let $Q$ be an open convex symmetric set in $\Rn$ and $f : Q \to \R$ be a Fr{\'e}chet differentiable symmetric function. Define $C = \lambda^{-1}(Q)$ and $G : C \to \R$ by $G = f \circ \lambda$. Then the following hold:
	\begin{itemize}
		\item[$(i)$] $G$ is pseudoconvex on $C$ if and only if $f$ is pseudoconvex on $Q$.
		\item[$(ii)$] $G$ is strictly pseudoconvex on $C$ if and only if $f$ is strictly pseudoconvex on $Q$.
	\end{itemize}
\end{theorem}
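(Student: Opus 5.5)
We plan to handle each direction of $(i)$ and $(ii)$ separately. The forward directions rely on the majorization and commutativity machinery of \autoref{lem:gradient_order_quasiconvex}, \autoref{lem:pseudoconvex_lemma_1} and \autoref{lem:pseudoconvex_lemma_2}. The converse directions use the standard embedding of $Q$ into $C$ through a fixed Jordan frame, exactly as in the proof of \autoref{thm:quasiconvexity}. Throughout, $G$ is Fr\'echet differentiable on the open set $C$ by \autoref{prop:transfer_differentiability}.

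\emph{From $f$ to $G$.} Suppose $f$ is pseudoconvex. A differentiable pseudoconvex function is quasiconvex (see the diagram / \cite{cambini-martein}), so \autoref{lem:gradient_order_quasiconvex} applies. Hence $\bfx$ and $\nabla G(\bfx)$ strongly operator commute and $\lambda(\nabla G(\bfx)) = \nabla f(\lambda(\bfx))$. Let $G(\bfy) < G(\bfx)$, that is, $f(\lambda(\bfy)) < f(\lambda(\bfx))$. Strong commutativity gives $\ip{\nabla G(\bfx)}{\bfx} = \ip{\nabla f(\lambda(\bfx))}{\lambda(\bfx)}$, and \autoref{prop:vonNeumann_inequality} gives $\ip{\nabla G(\bfx)}{\bfy} \le \ip{\nabla f(\lambda(\bfx))}{\lambda(\bfy)}$. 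Therefore
\[ \ip{\nabla G(\bfx)}{\bfy - \bfx} \le \ip{\nabla f(\lambda(\bfx))}{\lambda(\bfy) - \lambda(\bfx)} < 0, \]
where the last inequality is the pseudoconvexity of $f$ at $u=\lambda(\bfx)$, $v=\lambda(\bfy)$.

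For the strict version, take $\bfx \neq \bfy$ with $G(\bfy) \le G(\bfx)$. If $\lambda(\bfx) \neq \lambda(\bfy)$, the same chain works, now using strict pseudoconvexity of $f$ in the last step. The main obstacle is the case $\lambda(\bfx) = \lambda(\bfy)$. There the right-hand side is $0$, so we must show that the von Neumann inequality is strict.

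Suppose instead that equality holds, so $\nabla G(\bfx)$ and $\bfy$ strongly operator commute. We apply \autoref{lem:pseudoconvex_lemma_2} with the triple $(\bfx, \nabla G(\bfx), \bfy)$. Its hypothesis \eqref{eq:transitivity_assumption} compares the eigenvalues $\lambda(\bfx)$ with $\lambda(\nabla G(\bfx)) = \nabla f(\lambda(\bfx))$. This is exactly the combination of \eqref{eq:gradient_equality_forward} with \autoref{lem:pseudoconvex_lemma_1}, the latter available because $f$ is strictly pseudoconvex. The lemma then shows that $\bfx$ and $\bfy$ strongly operator commute. By \autoref{prop:strongly_operator_commute}$(c)$, $\norm{\bfx - \bfy} = \norm{\lambda(\bfx) - \lambda(\bfy)} = 0$, contradicting $\bfx \ne \bfy$.

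\emph{From $G$ to $f$.} Fix a Jordan frame $\{\bfe_1,\dots,\bfe_n\}$. Given $u,v \in Q$, set $\bfx = \sum u_i \bfe_i$ and $\bfy = \sum v_i \bfe_i$; these lie in $C$ and satisfy $G(\bfx)=f(u)$, $G(\bfy)=f(v)$, and $\bfx \ne \bfy$ whenever $u \ne v$. By symmetry of $f$ we have $\nabla f(Pu) = P\nabla f(u)$ for every permutation matrix $P$. Combining this with \autoref{prop:transfer_differentiability}, we get $\nabla G(\bfx) = \sum_i [\nabla f(u)]_i \bfe_i$ even though $u$ need not be ordered. Orthonormality of the frame then yields
\[ \ip{\nabla G(\bfx)}{\bfy - \bfx} = \ip{\nabla f(u)}{v - u}. \]
Consequently, the (strict) pseudoconvexity implication for $G$ at $(\bfx,\bfy)$ is literally the corresponding implication for $f$ at $(u,v)$, which proves both converses.
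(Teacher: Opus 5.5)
Your proposal is correct and follows the paper's proof essentially step for step: the same von Neumann/strong-commutativity chain for the forward directions, the same contradiction via \eqref{eq:gradient_equality_forward}, \autoref{lem:pseudoconvex_lemma_1} and \autoref{lem:pseudoconvex_lemma_2} in the case $\lambda(\bfx)=\lambda(\bfy)$, and the same Jordan-frame embedding with $\nabla f(Pu)=P\nabla f(u)$ for both converses. The only cosmetic difference is that you put $u$ itself on the frame, relabeling the frame to apply \autoref{prop:transfer_differentiability}, whereas the paper first replaces $u,v$ by $Pu,Pv$ with $Pu=u^\downarrow$; this is the same computation.
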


\begin{proof}
	We first record a useful inequality. Suppose that $f$ is quasiconvex, and let $\bfx, \bfy \in C$ satisfy $G(\bfy) \leq G(\bfx)$. By \autoref{lem:gradient_order_quasiconvex}, $\bfx$ and $\nabla G(\bfx)$ strongly operator commute and $\lambda(\nabla G(\bfx)) = \nabla f(\lambda(\bfx))$. Thus
	\begin{subequations} \label{pseudoconvex_inequality}
		\begin{align}
			\ip{\nabla G(\bfx)}{\bfy - \bfx} 
			&= \ip{\nabla G(\bfx)}{\bfy} - \ip{\nabla G(\bfx)}{\bfx} \notag \\
			&\leq \ip{\lambda(\nabla G(\bfx))}{\lambda(\bfy)} - \ip{\lambda(\nabla G(\bfx))}{\lambda(\bfx)} \label{eq:pseudoconvex_inequality_1} \\
			&= \ip{\nabla f(\lambda(\bfx))}{\lambda(\bfy)} - \ip{\lambda(\nabla G(\bfx))}{\lambda(\bfx)} \notag \\
			&= \ip{\nabla f(\lambda(\bfx))}{\lambda(\bfy) - \lambda(\bfx)} \notag \\
			&\leq 0. \label{eq:pseudoconvex_inequality_2}
		\end{align}
	\end{subequations}
	Here, the inequality \eqref{eq:pseudoconvex_inequality_1} is due to the von Neumann-type inequality (\autoref{prop:vonNeumann_inequality}) and the first-order characterization of differentiable quasiconvex functions yields the second inequality.
	
	\gap
	
	$(i)$ Suppose $f$ is pseudoconvex on $Q$. Then $f$ is quasiconvex. Let $\bfx, \bfy \in C$ satisfy $G(\bfy) < G(\bfx)$. Since $f(\lambda(\bfy)) < f(\lambda(\bfx))$, the pseudoconvexity of $f$ implies that the inequality \eqref{eq:pseudoconvex_inequality_2} is strict. This shows that $G$ is pseudoconvex.
	
	We now show the converse. Suppose $G$ is pseudoconvex on $C$. Take $u, v \in Q$ such that $f(v) < f(u)$. Choose a permutation matrix $P$ such that $Pu = u^\downarrow$. For a fixed Jordan frame $\{\bfe_1, \bfe_2, \ldots, \bfe_n\}$, define $\bfx, \bfy \in C$ by
	\begin{equation} \label{eq:pseudoconvex_xy}
		\bfx = \sum_{i=1}^{n} [Pu]_i \bfe_i, \quad \bfy = \sum_{i=1}^{n} [Pv]_i \bfe_i.
	\end{equation}
	Since $\lambda(\bfx) = Pu$, we obtain $\displaystyle \nabla G(\bfx) = \sum_{i=1}^{n} [\nabla f(Pu)]_i \bfe_i$ from \autoref{prop:transfer_differentiability}. Since $f$ is symmetric, we have $\nabla f(Pu) = P \nabla f(u)$ by the Chain rule. Also, symmetry of $f$ gives $G(\bfy) = f(v) < f(u) = G(\bfx)$. Thus, $\ip{\nabla G(\bfx)}{\bfy - \bfx} < 0$ by pseudoconvexity of $G$. Now, by the orthonormality of Jordan frames, 
	\begin{align*}
		\ip{\nabla G(\bfx)}{\bfy - \bfx} & = \ip{\sum_{i=1}^{n} [\nabla f(Pu)]_i \bfe_i}{\sum_{j=1}^{n} ([Pv]_j - [Pu]_j) \bfe_j} \\
		& = \sum_{i, j = 1}^{n} [\nabla f(Pu)]_i ([Pv]_j - [Pu]_j) \ip{\bfe_i}{\bfe_j} \\
		& = \sum_{k=1}^{n} [\nabla f(Pu)]_k ([Pv]_k - [Pu]_k) \\
		& = \sum_{k=1}^{n} [P \nabla f(u)]_k [P(v - u)]_k) \\
		& = \ip{P\nabla f(u)}{P(v - u)} \\
		& = \ip{\nabla f(u)}{v - u}.
	\end{align*}
	This shows that $f$ is pseudoconvex on $Q$.
	
	\gap
	
	$(ii)$ Suppose $f$ is strictly pseudoconvex on $Q$. Take $\bfx, \bfy \in C$ such that $\bfx \neq \bfy$ and $G(\bfy) \leq G(\bfx)$. Then we have $f(\lambda(\bfy)) \leq f(\lambda(\bfx))$. We now consider two cases. First, assume $\lambda(\bfx) \neq \lambda(\bfy)$. In this case, the strict pseudoconvexity of $f$ implies that \eqref{eq:pseudoconvex_inequality_2} is strict.
	
	It remains to consider the case $\lambda(\bfx) = \lambda(\bfy)$. We claim that $\nabla G(\bfx)$ and $\bfy$ do not strongly operator commute. Suppose, to the contrary, that they do. Since $f$ is strictly pseudoconvex, \autoref{lem:pseudoconvex_lemma_1} and \cite[Remark 37]{baes} imply that
	\[ [\lambda(\bfx)]_i = [\lambda(\bfx)]_j \iff [\nabla f(\lambda(\bfx))]_i = [\nabla f(\lambda(\bfx))]_j. \]
	By \autoref{lem:gradient_order_quasiconvex}, $\bfx$ and $\nabla G(\bfx)$ strongly operator commute and $\lambda(\nabla G(\bfx))=\nabla f(\lambda(\bfx))$. Therefore \autoref{lem:pseudoconvex_lemma_2} implies that $\bfx$ and $\bfy$ strongly operator commute. Since $\lambda(\bfx) = \lambda(\bfy)$, this gives $\bfx = \bfy$, reaching a contradiction. Hence $\nabla G(\bfx)$ and $\bfy$ do not strongly operator commute, and so
	\[ \ip{\nabla G(\bfx)}{\bfy} < \ip{\lambda(\nabla G(\bfx))}{\lambda(\bfy)}. \]
	Thus the inequality \eqref{eq:pseudoconvex_inequality_1} becomes strict. Hence, in both cases, $G$ is strictly pseudoconvex on $C$.
	
	Conversely, suppose $G$ is strictly pseudoconvex on $C$. Take $u, v \in Q$ such that $u \neq v$ and $f(v) \leq f(u)$. Defining $\bfx, \bfy \in C$ as in \eqref{eq:pseudoconvex_xy}, we readily see that $\bfx \neq \bfy$ and, as $f$ is symmetric, $G(\bfy) \leq G(\bfx)$. It follows that
	\[ \ip{\nabla f(u)}{v - u} = \ip{\nabla G(\bfx)}{\bfy - \bfx} < 0. \]
	Therefore $f$ is strictly pseudoconvex on $Q$.
\end{proof}

\begin{example}
	Consider the spectral open convex set $C = \set{\bfx \in \V}{\tr(\bfx) > -1}$ and the following linear-fractional objective function $G : C \to \R$ defined by
	\[ G(\bfx) = \frac{\tr(\bfx)}{1 + \tr(\bfx)}. \]
	Then $G$ is a spectral function with the associated symmetric function $f : Q \to \R$, where $Q = \set{u \in \Rn}{\sum_{i=1}^{n} u_i > -1}$, given by $f(u) = \phi\big( \sum_{i=1}^{n} u_i \big)$, where $\phi(t) = \frac{t}{1 + t}$ for $t > -1$. Since $\phi$ is strictly increasing on $(-1, \infty)$, it follows that $f$ is pseudoconvex. Thus, by the transfer principle for pseudoconvexity, we conclude that $G$ is pseudoconvex on $C$.
\end{example}

We now consider another notion of generalized convexity frequently used in optimization.

\begin{definition}
	Let $C$ be a convex set in $\V$ and consider a function $G : C \to \R$. Let $\mu > 0$.
	\begin{itemize}
		\item[(a)] $G$ is said to be \emph{$\mu$-strongly convex} on $C$ if a function $G_{\mu} : C \to \R$ defined by 
		\[ G_{\mu}(\bfx) = G(\bfx) - \frac{\mu}{2} \norm{\bfx}^2\] 
		is convex. Equivalently,
		\[ G((1 - \alpha)\bfx + \alpha \bfy) \leq (1 - \alpha)G(\bfx) + \alpha G(\bfy) - \frac{\mu}{2} \alpha(1 - \alpha) \norm{\bfx - \bfy}^2 \]
		holds for all $\bfx, \bfy \in C$ and $\alpha \in [0, 1]$.
		
		\item[(b)] $G$ is said to be \emph{$\mu$-strongly quasiconvex} on $C$ if 
		\[ G((1 - \alpha)\bfx + \alpha \bfy) \leq \max\{ G(\bfx), G(\bfy) \} - \frac{\mu}{2} \alpha(1 - \alpha) \norm{\bfx - \bfy}^2 \]
		holds for all $\bfx, \bfy \in C$ and $\alpha \in [0, 1]$.
	\end{itemize}
\end{definition}

We now describe transfer principles for $\mu$-strong convexity and $\mu$-strong quasiconvexity.

\begin{theorem}
	Let $Q$ be a convex symmetric set in $\Rn$ and $f : Q \to \R$ be a symmetric function. Define $C = \lambda^{-1}(Q)$ and $G : C \to \R$ by $G = f \circ \lambda$. Then, for $\mu > 0$, $G$ is $\mu$-strongly convex on $C$ if and only if $f$ is $\mu$-strongly convex on $Q$.
\end{theorem}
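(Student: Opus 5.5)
The plan is to reduce the statement to \autoref{thm:convexity}$(i)$ via the definition $G_\mu(\bfx) = G(\bfx) - \frac{\mu}{2}\norm{\bfx}^2$. The key observation is that the canonical norm is itself spectral. Indeed, for $\bfx = \sum_i \lambda_i(\bfx)\bfe_i$, orthonormality of the Jordan frame gives
\[ \norm{\bfx}^2 = \ip{\bfx}{\bfx} = \sum_{i=1}^{n} \lambda_i(\bfx)^2 = \norm{\lambda(\bfx)}^2 . \]
This is also the special case $\bfy = \mathbf{0}$ of \autoref{prop:strongly_operator_commute}$(c)$, since every element strongly operator commutes with $\mathbf{0}$. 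Hence
\[ G_\mu = f_\mu \circ \lambda, \quad\text{where } f_\mu(u) = f(u) - \frac{\mu}{2}\norm{u}^2 , \]
and $f_\mu : Q \to \R$ is symmetric, because both $f$ and the Euclidean norm are permutation invariant.

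With this in hand, I apply \autoref{thm:convexity}$(i)$ to the spectral function $G_\mu$ with associated symmetric function $f_\mu$ on the convex symmetric set $Q$. Convexity of $Q$ transfers to $C = \lambda^{-1}(Q)$ by the cited transfer results for spectral sets, which is also what makes \autoref{thm:convexity} applicable. This yields that $G_\mu$ is convex on $C$ if and only if $f_\mu$ is convex on $Q$. By the definition of $\mu$-strong convexity, this is exactly the statement that $G$ is $\mu$-strongly convex on $C$ if and only if $f$ is $\mu$-strongly convex on $Q$.

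I do not expect a genuine obstacle. The only points needing care are checking that the identity $\norm{\bfx} = \norm{\lambda(\bfx)}$ holds for the canonical inner product $\ip{\bfx}{\bfy} = \tr(\bfx\circ\bfy)$, which requires $\norm{\bfe_i} = 1$ for primitive idempotents, and recording the symmetry of $f_\mu$.

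As a fallback, one could argue directly from the inequality form of the definition, imitating \autoref{thm:quasiconvexity}$(i)$. For the converse direction, embed $u, v, w$ in a fixed Jordan frame as in \eqref{eq:quasiconvex_xyz}; there $\norm{\bfx-\bfy} = \norm{u-v}$, so the inequality transfers verbatim to $f$. For the forward direction, one would combine majorization with Schur-convexity of the convex symmetric function $f_\mu$. The reduction to $f_\mu$ avoids this detour, so I would present only the short argument.
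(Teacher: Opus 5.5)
Your proposal is correct and follows essentially the same route as the paper. Both write $G_\mu = f_\mu \circ \lambda$ using the norm-preserving identity $\norm{\bfx} = \norm{\lambda(\bfx)}$ and then invoke \autoref{thm:convexity}$(i)$; your extra checks (symmetry of $f_\mu$, convexity of $C$ inherited from $Q$, and $\norm{\bfe_i}=1$) just spell out details the paper leaves implicit.
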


\begin{proof}
	Recall that the set of spectral functions forms a vector space. That is,
	\[ (\alpha_1 G_1 + \alpha_2 G_2)(\bfx) = (\alpha_1 f_1 + \alpha_2 f_2)(\lambda(\bfx)) \]
	for all spectral functions $G_1 = f_1 \circ \lambda$, $G_2 = f_2 \circ \lambda$ and $\alpha_1, \alpha_2 \in \R$. Hence, for each $\bfx \in C$,
	\[ G_\mu(\bfx) = G(\bfx) - \frac{\mu}{2} \norm{\bfx}^2 = f(\lambda(\bfx)) - \frac{\mu}{2} \norm{\lambda(\bfx)}^2 = f_\mu(\lambda(\bfx)), \]
	where the second equality is due to the fact that $\lambda$ preserves the norm. This shows that $G_\mu$ is a spectral function with the associated symmetric function $f_\mu$ given by $f_\mu(u) = f(u) - \frac{\mu}{2} \norm{u}^2$. The conclusion now follows from \autoref{thm:convexity}.
\end{proof}

\begin{theorem}
	Let $Q$ be a convex symmetric set in $\Rn$ and $f : Q \to \R$ be a symmetric function. Define $C = \lambda^{-1}(Q)$ and $G : C \to \R$ by $G = f \circ \lambda$. Then, for $\mu > 0$, $G$ is $\mu$-strongly quasiconvex on $C$ if and only if $f$ is $\mu$-strongly quasiconvex on $Q$.
\end{theorem}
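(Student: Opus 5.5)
The plan is to imitate the proof of \autoref{thm:quasiconvexity}$(i)$, but to keep track of the quadratic penalty term. The converse direction ($G$ strongly quasiconvex $\Rightarrow$ $f$ strongly quasiconvex) is routine. Given $u,v\in Q$ and $\alpha\in[0,1]$, I define $\bfx,\bfy,\bfz$ along a fixed Jordan frame as in \eqref{eq:quasiconvex_xyz}. Then $\bfz=(1-\alpha)\bfx+\alpha\bfy$, and $G(\bfx)=f(u)$, $G(\bfy)=f(v)$, $G(\bfz)=f(w)$ by symmetry of $f$. By orthonormality of the frame, $\norm{\bfx-\bfy}=\norm{u-v}$. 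Substituting these into the defining inequality for $G$ gives the one for $f$.

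For the forward direction, the difficulty is that $\norm{\lambda(\bfx)-\lambda(\bfy)}\le\norm{\bfx-\bfy}$. So applying strong quasiconvexity of $f$ to $\lambda(\bfx),\lambda(\bfy)$ yields a penalty that is too small. The deficit has to be recovered from the majorization step $\lambda(\bfz)\prec w:=(1-\alpha)\lambda(\bfx)+\alpha\lambda(\bfy)$. Using $\norm{\lambda(\cdot)}=\norm{\cdot}$ and expanding squares, one computes
\[ \norm{w}^2-\norm{\lambda(\bfz)}^2 = 2\alpha(1-\alpha)\big(\ip{\lambda(\bfx)}{\lambda(\bfy)}-\ip{\bfx}{\bfy}\big) = \alpha(1-\alpha)\big(\norm{\bfx-\bfy}^2-\norm{\lambda(\bfx)-\lambda(\bfy)}^2\big). \]
The key step is therefore a strengthened version of \autoref{lem:quasiconvex_function}: if $f$ is symmetric and $\mu$-strongly quasiconvex and $u\prec w$ in $Q$, then
\[ f(u)\le f(w)-\tfrac{\mu}{2}\big(\norm{w}^2-\norm{u}^2\big). \]
Granting this, I chain it with strong quasiconvexity of $f$ at $\lambda(\bfx),\lambda(\bfy)$:
\[ G(\bfz)=f(\lambda(\bfz))\le f(w)-\tfrac{\mu}{2}\big(\norm{w}^2-\norm{\lambda(\bfz)}^2\big) \le \max\{G(\bfx),G(\bfy)\}-\tfrac{\mu}{2}\alpha(1-\alpha)\norm{\lambda(\bfx)-\lambda(\bfy)}^2-\tfrac{\mu}{2}\big(\norm{w}^2-\norm{\lambda(\bfz)}^2\big). \]
By the identity above, the two penalties add up to exactly $\tfrac{\mu}{2}\alpha(1-\alpha)\norm{\bfx-\bfy}^2$.

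The main obstacle is the strengthened lemma. The Birkhoff representation $u=\sum_i\alpha_iP_iw$ used in \autoref{lem:quasiconvex_function} does not work well here, because the max structure discards one of the penalties in an inductive splitting. Instead I would use the classical fact (Marshall--Olkin, Lemma 2.B.1) that $u\prec w$ implies $u$ is reached from $w$ by finitely many T-transforms, $v_{k+1}=(1-t_k)v_k+t_kP_kv_k$, with $P_k$ a transposition and $t_k\in[0,1]$. All $v_k$ lie in $Q$ by convexity and symmetry. For each step, strong quasiconvexity and symmetry ($f(P_kv_k)=f(v_k)$) give $f(v_{k+1})\le f(v_k)-\tfrac{\mu}{2}t_k(1-t_k)\norm{v_k-P_kv_k}^2$. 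Since $\norm{P_kv_k}=\norm{v_k}$, the identity $\norm{(1-t)a+tb}^2=(1-t)\norm{a}^2+t\norm{b}^2-t(1-t)\norm{a-b}^2$ shows that this penalty equals $\tfrac{\mu}{2}(\norm{v_k}^2-\norm{v_{k+1}}^2)$. Summing over $k$, the sum telescopes to the desired inequality.
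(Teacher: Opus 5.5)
Your proposal is correct and matches the paper's proof essentially step for step. The converse goes through a fixed Jordan frame. The forward direction combines three ingredients:
\begin{itemize}
\item the T-transform telescoping inequality $f(u)\le f(w)-\tfrac{\mu}{2}(\norm{w}^2-\norm{u}^2)$ for $u\prec w$;
\item the identity $\norm{w}^2-\norm{\lambda(\bfz)}^2=\alpha(1-\alpha)(\norm{\bfx-\bfy}^2-\norm{\lambda(\bfx)-\lambda(\bfy)}^2)$;
\item the Ky Fan majorization $\lambda(\bfz)\prec w$.
\end{itemize}
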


\begin{proof}
	We first record a useful inequality. Suppose that $f$ is symmetric and $\mu$-strongly quasiconvex on $Q$. If $u, v \in Q$ and $u \prec v$, then
	\begin{equation} \label{eq:strong_qc_majorization}
		f(u) \leq f(v) - \frac{\mu}{2} \big( \norm{v}^{2} - \norm{u}^{2} \big).
	\end{equation}
	Indeed, by \cite[Theorem 2.B.1]{marshal-olkin-arnold}, $u$ can be obtained from $v$ by successive applications of a finite number of $T$-transformations. That is, there exist $v_0, v_1, \ldots, v_m \in Q$ such that $v_0 = v$, $v_m = u$, and
	\[ v_{k+1} = (1 - \tau_{k}) v_{k} + \tau_{k} Q_{k} v_{k} \]
	for $\tau_{k} \in [0, 1]$ and some transposition matrix $Q_{k}$, for each $k = 0, 1, \ldots, m-1$. Note that each $v_k$ belongs to $Q$ as $Q$ is convex and symmetric. Also, we see that $\norm{Q_k v_k} = \norm{v_k}$. After some algebraic manipulation, it follows that
	\[ \norm{v_{k+1}}^{2} = \norm{v_{k}}^{2} - \tau_{k} (1 - \tau_{k}) \norm{v_{k} - Q_{k}v_{k}}^{2}. \]
	Since $f$ is symmetric, $f(Q_{k} v_{k}) = f(v_{k})$. Hence, by $\mu$-strong quasiconvexity,
	\begin{align*}
		f(v_{k+1}) &\leq f(v_{k}) - \frac{\mu}{2} \tau_{k} (1 - \tau_{k}) \norm{v_{k} - Q_{k} v_{k}}^{2} \\
		&= f(v_{k}) - \frac{\mu}{2} \big( \norm{v_{k}}^{2} - \norm{v_{k+1}}^{2} \big).
	\end{align*}
	Summing these inequalities for $k = 0, 1, \ldots, m-1$ yields
	\[ f(u) \leq f(v) - \frac{\mu}{2} \big( \norm{v}^{2} - \norm{u}^{2}\big), \]
	which verifies \eqref{eq:strong_qc_majorization}.
	
	Now suppose that $f$ is $\mu$-strongly quasiconvex on $Q$. Take $\bfx, \bfy \in C$ and $\alpha \in [0, 1]$. Define
	\[ \bfz = (1 - \alpha) \bfx + \alpha \bfy \quad \text{and} \quad
	w = (1 - \alpha) \lambda(\bfx) + \alpha \lambda(\bfy). \]
	Since $\lambda$ is norm-preserving, a direct calculation gives
	\[ \norm{w}^{2} - \norm{\lambda(\bfz)}^{2} = \alpha (1 - \alpha) \big( \norm{\bfx - \bfy}^{2} - \norm{\lambda(\bfx) - \lambda(\bfy)}^{2} \big). \]
	Now, we have $\lambda(\bfz) \prec w$ by Ky-Fan inequality. Thus, from \eqref{eq:strong_qc_majorization}, we obtain
	\[ f(\lambda(\bfz)) \leq f(w) - \frac{\mu}{2} \big( \norm{w}^{2} - \norm{\lambda(\bfz)}^{2} \big). \]
	Moreover, by $\mu$-strong quasiconvexity of $f$,
	\[ f(w) \leq \max \{ f(\lambda(\bfx)), f(\lambda(\bfy)) \} - \frac{\mu}{2} \alpha (1 - \alpha) \norm{\lambda(\bfx) - \lambda(\bfy)}^{2}. \]
	Combining the above identities and inequalities, we see that
	\begin{align*}
		G(\bfz) &= f(\lambda(\bfz)) \\
		&\leq \max \{ f(\lambda(\bfx)), f(\lambda(\bfy)) \} - \frac{\mu}{2} \left[ \alpha (1 - \alpha) \norm{\lambda(\bfx) - \lambda(\bfy)}^{2} + \norm{w}^{2} - \norm{\lambda(\bfz)}^{2} \right] \\
		&= \max \{ G(\bfx), G(\bfy) \} - \frac{\mu}{2} \alpha (1 - \alpha) \norm{\bfx - \bfy}^{2}.
	\end{align*}
	Hence $G$ is $\mu$-strongly quasiconvex on $C$.
	
	Conversely, suppose that $G$ is $\mu$-strongly quasiconvex on $C$. Take $u, v \in Q$ and $\alpha \in [0, 1]$. Let $w = (1 - \alpha)u + \alpha v$. Fix a Jordan frame $\{\bfe_1, \bfe_2, \ldots, \bfe_n\}$ and define $\bfx, \bfy, \bfz \in C$ as in \eqref{eq:quasiconvex_xyz}. By the symmetry of $f$, we have $G(\bfx) = f(u)$, $G(\bfy) = f(v)$, and $G(\bfz) = f(w)$. Also, by orthonormality of a Jordan frame, we get
	\[ \norm{\bfx - \bfy}^2 = \bigg\Vert \sum_{i=1}^{n} (u_i - v_i) \bfe_i \bigg\Vert^2 = \sum_{i=1}^{n} (u_i - v_i)^2 \norm{\bfe_i}^2 = \norm{u - v}^2. \]
	
	Thus the $\mu$-strong quasiconvexity of $G$ gives
	\begin{align*}
		f(w) = G(\bfz) &\leq \max\{G(\bfx), G(\bfy)\} - \frac{\mu}{2} \alpha (1 - \alpha) \norm{\bfx - \bfy}^{2} \\
		&= \max\{f(u), f(v)\} - \frac{\mu}{2} \alpha (1 - \alpha) \norm{u - v}^{2}.
	\end{align*}
	Therefore $f$ is $\mu$-strongly quasiconvex on $Q$.
\end{proof}

\section{Concluding remarks}

In this paper, we established transfer principles for several generalized convexity notions, together with transfer principles for strong convexity and strong quasiconvexity of spectral functions on Euclidean Jordan algebras.

\gap
A natural direction for further research is to examine whether analogous transfer principles remain valid in more abstract settings. In this regard, the theory of Fan-Theobald-von Neumann systems (FTvN systems, for short) and semi-FTvN systems, developed and studied in \cite{gowda_FTvNsystem, gowda-jeong_FTvNsystem1, jeong-gowda_FTvNsystem2, gowda-sossa_semiFTvNsystem}, provides a promising framework for such an extension. These systems were introduced as an axiomatic setting for studying optimization problems involving the sum, or a suitable combination, of an objective function and a spectral function over a spectral set.

\gap
A \emph{semi-FTvN} system is a triple $(\V, \W, \lambda)$, where $\V$ and $\W$ are real inner product spaces and $\lambda : \V \to \W$ is an \emph{eigenvalue map} satisfying
\[ \norm{\lambda(\bfx)} = \norm{\bfx} \quad \text{and} \quad \ip{\bfx}{\bfy} \leq \ip{\lambda(\bfx)}{\lambda(\bfy)} \]
for $\bfx, \bfy \in \V$. In this system, for $\bfu \in \V$, the set $[\bfu] = \set{\bfx \in \V}{\lambda(\bfx) = \lambda(\bfu)}$ is called the \emph{eigenvalue orbit} of $\bfu$. A semi-FTvN system $(\V, \W, \lambda)$ becomes an \emph{FTvN system} provided $\max_{\bfx \in [\bfu]} \ip{\bfc}{\bfx} = \ip{\lambda(\bfc)}{\lambda(\bfu)}$ for any $\bfc, \bfu \in \V$. Since such systems retain an abstract eigenvalue map together with a von Neumann-type inequality, they provide a framework for extending the concepts of spectral sets and spectral functions beyond Euclidean Jordan algebras. It would therefore be interesting to investigate whether the transfer principles can be extended to semi-FTvN systems or FTvN systems, possibly under some additional assumptions.

\section*{Acknowledgment}

This work was supported by the Soongsil University Research Fund (New Faculty Research Support) of 2025 (Grant No.\,202510001162).


\end{document}